\newtheorem{theorem}{Theorem}[section]
\newtheorem{proposition}[theorem]{Proposition}
\newtheorem{corollary}[theorem]{Corollary}
\theoremstyle{definition}
\newtheorem{definition}[theorem]{Definition}
\newtheorem{example}[theorem]{Example}
\theoremstyle{remark}
\newtheorem{remark}[theorem]{Remark}
\numberwithin{equation}{section}
\def\Sz{\operatorname{Sz}}
\def\iten{\widehat{\otimes}_\varepsilon}
\def\vspan{\operatorname{span}}
\def\cvspan{\closure{\vspan}}
\def\ran{\operatorname{ran}}
\def\E{\mathsf{ E}}
\def\F{\mathsf{ F}}
\def\G{\mathsf{ G}}
\def\Ed{{\E^*}}
\def\N{\mathbb{ N}}
\def\LL{\mathcal{ L}}
\def\KK{\mathcal{ K}}
\def\NN{\mathcal{ N}}
\def\<{\langle}
\def\>{\rangle}
\providecommand{\norm}[1]{\left\lVert#1\right\rVert}
\def\eqnorm{|||\hspace*{.35em} |||}
\newcommand{\closure}[2][3]{%
  {}\mkern#1mu\overline{\mkern-#1mu#2}}
\def\d{\overline{\delta}}
\def\p{\overline{\rho}}
\newcommand{\sd}[1]{{\hat{\delta}_{#1}}}
\renewcommand{\sp}[1]{{\hat{\rho}_{#1}}}
\begin{document}

\title{On strong asymptotic uniform smoothness and convexity}

\author{Luis Garc\'ia-Lirola}
\address[L.~Garc\'ia-Lirola and M.~Raja] {Departamento de Matem\'aticas, Facultad de Matem\'aticas, Universidad de Murcia,\\ 
30100 Espinardo (Murcia), Spain}
\email[L.~Garc\'ia-Lirola]{luiscarlos.garcia@um.es}
\author{Mat\'ias Raja}
\email[M.~Raja]{matias@um.es}

\thanks{Partially supported by the grants MINECO/FEDER MTM2014-57838-C2-1-P and Fundaci\'on S\'eneca CARM
19368/PI/14}
\date{February, 2017}

\keywords{Asymptotically uniformly smooth norm, Asymptotically uniformly convex norm, Injective tensor product, Space of compact operators, Orlicz space}

\subjclass[2010]{Primary 46B20, 46B28; Secondary 46B45}

\begin{abstract} We introduce the notions of strong asymptotic uniform smoothness and convexity. We show that the injective tensor product of strongly asymptotically uniformly smooth spaces is asymptotically uniformly smooth. This applies in particular to uniformly smooth spaces admitting a monotone FDD, extending a result by Dilworth, Kutzarova, Randrianarivony, Revalski and Zhivkov~\cite{DKLRZ13}. Our techniques also provide a characterisation of Orlicz functions $M, N$ such that the space of compact operators $\KK(h_M,h_N)$ is asymptotically uniformly smooth. Finally we show that $\KK(X, Y)$ is not strictly convex whenever $X$ and $Y$ are at least two-dimensional, which extends a result by Dilworth and Kutzarova~\cite{DK95}.
\end{abstract}

\maketitle

\section{Introduction and notation}\label{sec:intro}

Consider a real Banach space $X$ and let $S_X$ be its unit sphere. For $t>0$, $x\in S_X$ we shall consider
\begin{align*}
\d_X(t,x) &= \sup_{\dim(X/Y)<\infty}\inf_{y\in S_Y} \norm{x+ty}-1;\\
\p_X(t,x) &= \inf_{\dim(X/Y)<\infty}\sup_{y\in S_Y} \norm{x+ty}-1\,.
\end{align*}
The \emph{modulus of asymptotic uniform convexity} of $X$ is given by
\[ \d_X(t) = \inf_{x\in S_X} \d_X(t,x)\,, \]
and \emph{modulus of asymptotic uniform smoothness} of $X$ is given by
\[ \p_X(t) = \sup_{x\in S_X} \p_X(t,x)\,. \]
The space $X$ is said to be \emph{asymptotically uniformly convex} (AUC for short) if $\d_X(t)>0$ for each $t>0$ and it is said to be \emph{asymptotically uniformly smooth} (AUS for short) if $\lim_{t\to 0} t^{-1}\p_X(t) = 0$.
 If $X$ is a dual space and we considered only weak* closed subspaces of $X$ then the corresponding modulus is denoted by $\d_X^*(t)$. 
The space $X$ is said to be \emph{weak* asymptotically uniformly convex} if $\d_X^*(t)>0$ for each $t>0$. Let us highlight that it is proved in~\cite{DKLR16} that a space is AUS if and only if its dual space is weak* AUC. In addition,  $\p_X$ is quantitatively related to $\d_{X}^*$ by Young's duality. We refer the reader to~\cite{JLPS02} and the references therein for a detailed study of these properties. The related notions of \emph{nearly uniformly convex} space (NUC for short) and \emph{nearly uniformly smooth} (NUS for short) were introduced by Huff~\cite{H80} and Prus~\cite{P89}. A space is NUS if and only if it is AUS and reflexive and if and only if its dual is NUC. 

If $X$, $Y$ are Banach spaces, we denote $\KK(X,Y)$ the space of compact operators from $X$ to $Y$ endowed with the operator norm. In addition, we denote $\NN(X,Y)$ the space of nuclear operators endowed with the nuclear norm. The tensor product $X \otimes Y$ can be identified with the space finite rank operators from $Y^*$ to $X$. That is, given $u = \sum_{i=1}^n x_i\otimes y_i$, $u(y^*) = ~\sum_{i=1}^n y^*(y_i)x_i$. By using the transposition mapping given by $x\otimes y\mapsto y\otimes x$, we can also consider $u$ as an operator from $X^*$ into $Y$. The completion of $X\otimes Y$ endowed with the operator norm is called the \emph{injective tensor product} and denoted by $X \iten Y$. Under suitable hypothesis (e.g.\ if there is an FDD for $Y$) the space $X^*\iten Y$ is isometric to $\KK(X, Y)$. Moreover, Grothendieck proved that if $Y^*$ has the RNP then $(X\iten Y)^*$ is isometric to $\NN(X, Y^*)$. For a comprehensive treatment on tensor products one may refer to the book~\cite{R02}. 

We will consider the following partial order for functions defined on $(0, 1]$. We write $f\preceq g$ if there is a constant $c > 0$ such that $f(t) \leq c g(t)$ for
all $t\in(0, 1]$. If $f\preceq g$ and $g\preceq f$, then we say that $f$ and $g$ are \emph{equivalent}. Given $1\leq p < \infty$, we will say that a modulus $\delta$ of convexity is of power type $p$ if $\delta\succeq t^p$, and that a modulus $\rho$ of smoothness is of power type $p$ if $\rho\preceq t^p$.

Lennard proved in~\cite{L90} that the space of trace class operators on a Hilbert space is weak* AUC. Equivalently, $\KK(\ell_2)$ is AUS. This result was extended by Besbes in~\cite{B92}, who showed that $\KK(\ell_p,\ell_p)$ is AUS whenever $1<p<\infty$. Moreover, in~\cite{DKLRZ13} it is proved that $\KK(\ell_p,\ell_q)$ is AUS with power type $\min\{p',q\}$ for every $1<p,q<\infty$. On the order hand, Causey recently showed in~\cite{C17} that the Szlenk index of $X\iten Y$ is equal to the maximum of the Szlenk indices of $X$ and $Y$ for all Banach spaces $X$ and $Y$. In particular, $X\iten Y$ admits an equivalent AUS norm if and only if $X$ and $Y$ do. Moreover, Draga and Kochanek have proved in~\cite{DK16} that is possible to get an equivalent AUS norm in $X\iten Y$ with power type the maximum of the ones of the norm of $X$ and $Y$. Nevertheless, it seems to be an open question if the injective tensor product of AUS spaces is an AUS space in its canonical norm. 

In this paper we introduce the notion of strongly AUC and strongly AUS spaces, and we show that the injective tensor product of strongly AUS spaces is AUS. In particular, our result applies to uniformly smooth spaces admitting monotone FDDs. 

\begin{theorem}\label{th:usitenaus}
Let $X$ and $Y$ be Banach spaces with monotone FDDs. If $X$ and $Y$ are uniformly smooth then $X\iten Y$ is AUS. Moreover,  
\[ \p_{X\iten Y} (t) \leq (1+ \frac{1}{2}\rho_X(4t))(1+\frac{1}{2}\rho_Y(4t))-1 \]
whenever $t\leq 1/4$. In particular, if $X$ is uniformly smooth with power type $p$ and $Y$ is uniformly smooth with power type $q$ then $X\iten Y$ is AUS with power type $\min\{p,q\}$.  
\end{theorem}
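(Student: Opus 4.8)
The plan is to first isolate the property of $X$ and $Y$ that makes everything run—the \emph{strong} asymptotic uniform smoothness furnished by uniform smoothness together with a monotone FDD—and then to feed it into a fibrewise estimate on the injective tensor product. The key preliminary observation is that for a space $X$ with a monotone FDD $(E_n)$ and canonical projections $P_n$ onto the first $n$ coordinates, any head vector $x$ (that is, $P_n x = x$) admits a norming functional supported in the head: if $f\in S_{X^*}$ norms $x$, then $P_n^* f$ also norms $x$ and $\|P_n^* f\|\le 1$, because $\|P_n^*\|=\|P_n\|=1$. As $X$ is uniformly smooth the supporting functional is unique, so it is already head-supported, whence $f(y)=0$ for every tail vector $y$. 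Combining this with the inequality $\|x+\tau y\|\le 1+f(\tau y)+2\rho_X(\tau\|y\|)$, valid in a uniformly smooth space, and using convexity of $\rho_X$, I obtain for a head vector $h$ and a tail vector $w$ an estimate of the form $\|h+w\|\le \|h\|\,(1+\tfrac12\rho_X(4\|w\|/\|h\|))$. The crucial feature is that a \emph{single}, canonical family of tail subspaces (the FDD tails) governs the smoothness uniformly over all head vectors; this uniformity is precisely what the notion of strong AUS is meant to capture, and what an unadorned AUS modulus—defined through an infimum over subspaces that may depend on the vector—need not provide.

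Next I would set up the tensor estimate. Fix $u\in S_{X\iten Y}$; by density of the finitely supported tensors I may assume, up to an arbitrarily small error cleaned up at the end by a routine approximation and semicontinuity argument, that $u\in E\iten F$, where $E$ and $F$ are heads of $X$ and $Y$ of some orders $n,m$, with projections $P_n$ on $X$ and $Q_m$ on $Y$. The finite-codimensional subspace I test against is $W=\ker(P_n\otimes Q_m)$, whose codimension equals $\dim(E\iten F)<\infty$; for $v\in S_W$ one has $(P_n\otimes Q_m)v=0$. Viewing elements of $X\iten Y$ as operators from $Y^*$ to $X$, I compute $\|u+tv\|=\sup_{y^*\in B_{Y^*}}\|(u+tv)(y^*)\|_X$. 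For each $y^*$ the head part $(P_n\otimes I)(u+tv)(y^*)=u(y^*)+tP_n v(y^*)$ lies in $E$, while the remainder $t(I-P_n)v(y^*)$ lies in the tail of $X$; applying the strong AUS estimate of $X$ gives, for the near-maximizing $y^*$, a factor $(1+\tfrac12\rho_X(4t))$ multiplying $\|(P_n\otimes I)(u+tv)(y^*)\|$. Taking the supremum over $y^*$ collapses the residual into the injective norm of $(P_n\otimes I)(u+tv)=u+t(P_n\otimes I)v$ inside $E\iten Y$. Since $(P_n\otimes Q_m)v=0$, the perturbation $(P_n\otimes I)v=(P_n\otimes(I-Q_m))v$ is supported in the tail of $Y$, whereas $u$ is supported in the head $F$; fixing $e^*\in B_{E^*}$ and reading off the corresponding vectors of $Y$ reduces matters to a head/tail estimate in $Y$, to which the strong AUS estimate of $Y$ applies, again uniformly in $e^*$, producing the second factor $(1+\tfrac12\rho_Y(4t))$. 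Multiplying the two factors yields $\|u+tv\|\le(1+\tfrac12\rho_X(4t))(1+\tfrac12\rho_Y(4t))$, and since this holds for all $v\in S_W$ and all $u\in S_{X\iten Y}$, the bound on $\p_{X\iten Y}(t)$ follows for $t\le 1/4$.

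The main obstacle is the uniformity built into the two applications of smoothness. In the $Y$-step I must control the whole family of norms $\|b(e^*)+t\,b'(e^*)\|_Y$, indexed by $e^*\in B_{E^*}$, where $b(e^*)$ is a head vector of $Y$ and $b'(e^*)$ a tail vector, using one and the same FDD tail subspace of $Y$; an ordinary AUS modulus does not obviously suffice here, and it is exactly at this point that strong AUS, with its fixed canonical tail subspaces, is indispensable. I also expect to spend care on the constants $\tfrac12$ and $4t$: these come from elementary bookkeeping restricted to the near-maximizing functionals, where the relevant head norms are bounded below (for $t\le 1/4$ one has $\|u+tv\|\ge 1-t$, forcing the head part to have norm at least $1-3t\ge 1/4$), so that the arguments of $\rho_X$ and $\rho_Y$ stay controlled. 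Once the displayed modulus bound is established, the power-type statement is immediate: if $\rho_X(t)\preceq t^p$ and $\rho_Y(t)\preceq t^q$, then $\p_{X\iten Y}(t)\preceq t^p+t^q\preceq t^{\min\{p,q\}}$, so $X\iten Y$ is AUS with power type $\min\{p,q\}$.
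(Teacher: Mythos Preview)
Your proposal is correct and follows essentially the same route as the paper: the paper first records that a monotone FDD together with uniform smoothness yields the strong AUS estimate $\sp{\E}(t)\le 2\rho_X(t)$ (Proposition~\ref{prop:UCimplySAUC}), and then runs exactly your two-step fibrewise argument on $X\iten Y$---splitting $u+tv$ along the head/tail of one factor, applying the strong AUS bound pointwise, taking the supremum, and repeating in the other factor (Theorem~\ref{th:AUSiten}). The only cosmetic differences are that the paper packages the head/tail inequality via the modulus $\sp{\E}$ rather than directly via $\rho_X$, and handles the small-head case by an explicit dichotomy on $\norm{y}$ rather than by restricting to near-maximizing functionals; your appeal to uniqueness of the supporting functional is harmless but unnecessary, since $P_n^*f$ already norms the head vector and vanishes on the tail.
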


The previous result and the isometry between $\KK(X,Y)$ and $X^*\iten Y$ yields the generalisation of Theorem 4.3 in~\cite{DKLRZ13}.

\begin{theorem}\label{th:uskaus}
Let $X, Y$ be Banach spaces and assume that $X^*$ and $Y$ have monotone FDDs. If $X$ is uniformly convex and $Y$ is uniformly smooth then $\KK(X,Y)$ is AUS. Moreover, if $X$ is uniformly convex with power type $p$ and $Y$ is uniformly smooth with power type $q$ then $\KK(X,Y)$ is AUS with power type $\min\{p',q\}$.
\end{theorem}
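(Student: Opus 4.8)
The plan is to deduce Theorem~\ref{th:uskaus} from Theorem~\ref{th:usitenaus} by exploiting the isometric identification $\KK(X,Y) \cong X^*\iten Y$ together with duality relations between asymptotic uniform convexity and smoothness. First I would recall that, under the stated FDD hypotheses, $\KK(X,Y)$ is isometric to $X^*\iten Y$, so it suffices to verify that $X^*\iten Y$ satisfies the hypotheses of Theorem~\ref{th:usitenaus}. The space $Y$ is assumed uniformly smooth and to have a monotone FDD, so it is directly admissible. The task is then to show that $X^*$ is uniformly smooth with a monotone FDD, given that $X$ is uniformly convex and $X^*$ admits a monotone FDD.

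The key structural step is the duality between uniform convexity and uniform smoothness: a Banach space $X$ is uniformly convex with modulus of power type $p$ if and only if its dual $X^*$ is uniformly smooth with modulus of power type $p'$, the conjugate exponent. Thus from $X$ uniformly convex (with power type $p$) I obtain $X^*$ uniformly smooth (with power type $p'$). Combined with the hypothesis that $X^*$ has a monotone FDD, this puts $X^*$ and $Y$ exactly in the setting of Theorem~\ref{th:usitenaus}. Applying that theorem to the pair $(X^*, Y)$ yields that $X^*\iten Y$, and hence $\KK(X,Y)$, is AUS.

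For the power-type conclusion, I would track the exponents through the identification: Theorem~\ref{th:usitenaus} gives that $X^*\iten Y$ is AUS with power type $\min$ of the smoothness power types of the two factors. Since $X^*$ is uniformly smooth with power type $p'$ and $Y$ with power type $q$, the resulting power type is $\min\{p',q\}$, matching the statement. This requires only assembling the quantitative estimate from Theorem~\ref{th:usitenaus} with the quantitative duality between the moduli of $X$ and $X^*$.

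The main obstacle I anticipate is not conceptual but technical: one must confirm that the isometry $\KK(X,Y)\cong X^*\iten Y$ genuinely holds under the precise FDD hypotheses invoked (it is the monotone FDD on $Y$ that guarantees this identification via the approximation property), and that the power-type duality is applied with the correct conjugate exponent so the modulus bounds compose cleanly. Provided these identifications are in place, the proof reduces to a direct invocation of Theorem~\ref{th:usitenaus}, so the substance of the argument is entirely inherited from the injective tensor product result.
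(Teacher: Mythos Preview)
Your proposal is correct and follows essentially the same approach as the paper: the paper's proof invokes Proposition~\ref{prop:UCimplySAUC}, Corollary~\ref{cor:kAUS}, and the duality between uniform convexity and uniform smoothness, which amounts precisely to passing from $X$ uniformly convex to $X^*$ uniformly smooth, then applying the injective tensor result via the isometry $\KK(X,Y)\cong X^*\iten Y$ (this isometry being guaranteed by the FDD on $Y$). The only cosmetic difference is that the paper routes through Corollary~\ref{cor:kAUS} (which packages the isometry and Corollary~\ref{cor:AUSiten} together) rather than citing Theorem~\ref{th:usitenaus} directly, but the underlying ingredients are identical.
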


Our techniques also lead to a characterisation of Orlicz functions $M$, $N$ such that the space $\KK(h_M, h_N)$ is AUS in terms of their Boyd indices $\alpha_M,\beta_M$ (see Section~\ref{sec:orlicz} for definitions). Namely, 

\begin{theorem}\label{th:kAUSorlicz} Let $M,N$ be Orlicz functions. The space $\KK(h_M,h_N)$ is AUS if and only if $\alpha_M,\alpha_N>1$ and $\beta_M<+\infty$. Moreover, $\min\{\beta_M',\alpha_N\}$ is the supremum of the numbers $\alpha>0$ such that the modulus of asymptotic smoothness of $\KK(h_M,h_N)$ is of power type $\alpha$.
\end{theorem}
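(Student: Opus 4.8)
The plan is to derive Theorem~\ref{th:kAUSorlicz} by combining the tensor-product machinery of Theorems~\ref{th:usitenaus} and \ref{th:uskaus} with the known relationship between the Boyd indices of an Orlicz function and the renorming properties of the associated Orlicz sequence space $h_M$. I would want to prove this by bootstrapping through a characterisation of when $h_M$ itself is (isomorphic to a space that is) uniformly smooth or uniformly convex, phrased in terms of $\alpha_M,\beta_M$, and then transporting this to $\KK(h_M,h_N)$ via the isometry $\KK(h_M,h_N)\cong h_M^*\iten h_N$.

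\begin{proof}[Proof sketch of Theorem~\ref{th:kAUSorlicz}]
The first step is to recall the standard dictionary between Boyd indices and convexity/smoothness of Orlicz sequence spaces: $h_M$ admits an equivalent uniformly convex norm precisely when $\alpha_M>1$, it admits an equivalent uniformly smooth norm precisely when $\beta_M<\infty$, and more quantitatively the optimal power types of these renormings are governed by $\alpha_M$ (for convexity) and $\beta_M$ (for smoothness). Dually, $h_M^*\cong h_{M^*}$ with $\alpha_{M^*}=\beta_M'$ and $\beta_{M^*}=\alpha_M'$, so that $h_M^*$ is uniformly convex iff $\beta_M<\infty$ and uniformly smooth iff $\alpha_M>1$.

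For the sufficiency direction, assume $\alpha_M,\alpha_N>1$ and $\beta_M<\infty$. Then $h_M^*$ is uniformly convex with power type $\beta_M'$ (optimally), and $h_N$ is uniformly smooth with power type $\alpha_N$. Since Orlicz sequence spaces carry the canonical monotone unit-vector basis — hence a monotone FDD, and likewise for the duals — the hypotheses of Theorem~\ref{th:uskaus} are met, giving that $\KK(h_M,h_N)$ is AUS with power type $\min\{\beta_M',\alpha_N\}$. Optimality of this exponent, i.e. that no larger power type is attainable, follows by restricting to the diagonal copies of $h_M^*$ and $h_N$ sitting isometrically inside $\KK(h_M,h_N)$ (as rank-one or block-diagonal operators), which forces the AUS modulus of $\KK(h_M,h_N)$ to dominate those of $h_M^*$ and of $h_N$ separately; combined with the sharpness of the Orlicz power-type estimates this pins the supremum of admissible $\alpha$ to exactly $\min\{\beta_M',\alpha_N\}$.

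For the necessity direction, suppose $\KK(h_M,h_N)$ is AUS. Because $h_N$ embeds isometrically and $1$-complementedly into $\KK(h_M,h_N)$ (fix a norm-one functional on $h_M$ and tensor), and AUS passes to subspaces, $h_N$ is AUS; being a separable space with an unconditional basis that is AUS forces $\beta_N<\infty$, hence in particular $\alpha_N>1$ would still need separate justification, so the cleaner route is to use that an AUS space is reflexive-or-contains-$\ell_1$-type obstructions. The genuinely delicate point, and the step I expect to be the main obstacle, is extracting $\alpha_M>1$ from the mere AUS-ness of the operator space: one must rule out the degenerate case $\alpha_M=1$ (which makes $h_M$ contain almost-isometric copies of $\ell_1$, so $h_M^*$ fails to be AUC and $\KK$ cannot be AUS) by a finite-block argument showing that $\ell_1^n$-structure in $h_M$ transfers to non-smoothness in $\KK(h_M,h_N)$. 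Assembling $\alpha_M>1$, $\alpha_N>1$ and $\beta_M<\infty$ then completes the equivalence.
\end{proof}
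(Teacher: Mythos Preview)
Your sufficiency argument has a genuine gap. You propose to invoke Theorem~\ref{th:uskaus}, which requires $X$ uniformly convex and $Y$ uniformly smooth (with monotone FDDs). There are two problems. First, the dictionary you quote is not right: $h_M$ admits an equivalent uniformly convex (or uniformly smooth) norm if and only if it is superreflexive, which needs \emph{both} $\alpha_M>1$ and $\beta_M<\infty$; a single Boyd-index condition does not suffice. In particular, the hypotheses of Theorem~\ref{th:kAUSorlicz} place no restriction on $\beta_N$, so $h_N$ may contain $c_0$ and then admits no uniformly smooth renorming whatsoever---Theorem~\ref{th:uskaus} simply does not apply. Second, even in the cases where such renormings exist, passing to an equivalent norm on $h_M$ or $h_N$ changes the operator norm on $\KK(h_M,h_N)$, so you would not be proving AUS for the canonical norm, which is what the theorem asserts.

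The paper avoids both issues precisely by working with the \emph{strongly AUS} notion rather than uniform smoothness. Propositions~\ref{prop:SAUSorlicz} and~\ref{prop:SAUCorlicz} show that, in its canonical Luxemburg norm and with respect to the unit-vector basis, $h_N$ is strongly AUS with power type $\alpha$ for every $\alpha<\alpha_N$, and $h_M$ is strongly AUC with power type $\beta$ for every $\beta>\beta_M$. Since $\alpha_M>1$ and $\beta_M<\infty$ make the basis of $h_M$ shrinking and monotone, the duality of Proposition~\ref{prop:SAUSduality} yields that $h_M^*$ is strongly AUS with power type $\beta$ for every $\beta<\beta_M'$. Then Corollary~\ref{cor:kAUS} (the strongly-AUS analogue of Theorem~\ref{th:uskaus}) applies directly, with no renorming. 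Your necessity sketch and the optimality argument via the embedded copies of $h_M^*$ and $h_N$ are essentially what the paper does, though the paper phrases the obstruction more directly: if any of the three index conditions fails, $\KK(h_M,h_N)$ contains an isomorphic copy of $\ell_1$ or has an $\ell_\infty$-type obstruction and is not even AUS-renormable.
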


Remark that, for the natural norm, not much can be expected. Indeed, Ruess and Stegall showed in~\cite[Corollary 3.5]{RS86} that neither the norm of $X\iten Y$ or the norm of $\KK(X,Y)$ are smooth whenever the dimension of $X$ and $Y$ are greater or equal than $2$. On the other hand, Dilworth and Kutzarova proved in~\cite{DK95} that $\LL(\ell_p,\ell_q)$ is not strictly convex for $1\leq p \leq q \leq \infty$. We have obtained the following result. 

\begin{proposition}\label{th:KKstrictlyconvex} Let $X$, $Y$ be Banach spaces with dimension greater or equal than $2$. Then $\KK(X,Y)$ and $X\iten Y$ are not strictly convex.
\end{proposition}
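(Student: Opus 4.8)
The plan is to peel the statement down to operators between two-dimensional spaces and then to display an explicit segment on the unit sphere.

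\emph{Reductions.} For compact operators, recall that a finite rank operator has the same operator norm as the injective norm of the corresponding element of $X^*\otimes Y$; hence $X^*\iten Y$ sits isometrically as a closed subspace of $\KK(X,Y)$. Since a subspace of a strictly convex space is strictly convex, it suffices to treat $X^*\iten Y$, and as $\dim X^*\ge 2$ both claims follow from the single assertion that $A\iten B$ is not strictly convex whenever $\dim A,\dim B\ge 2$. Moreover the injective tensor norm is injective (functionals on a subspace extend isometrically, by Hahn--Banach), so for two-dimensional subspaces $A_0\subseteq A$, $B_0\subseteq B$ the map $A_0\iten B_0\to A\iten B$ is an isometry onto a closed subspace. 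I may therefore assume $\dim A=\dim B=2$ and, identifying a tensor with the operator it induces from $A^*$ to $B$, reduce to showing that $\LL(E,F)$ with the operator norm is not strictly convex for two-dimensional normed spaces $E=A^*$, $F=B$.

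\emph{Easy cases.} I will exhibit distinct $T_1\ne T_2$ in $S_{\LL(E,F)}$ with $\norm{T_1+T_2}=2$. If $F$ is not strictly convex, choose $v_1\ne v_2\in S_F$ with $\tfrac12(v_1+v_2)\in S_F$ and any $\phi\in S_{E^*}$; the rank-one operators $T_i=\phi\otimes v_i$ are distinct, of norm one, and average to $\phi\otimes\tfrac12(v_1+v_2)\in S_{\LL(E,F)}$. Dually, if $E$ is not smooth then $B_{E^*}$ is not strictly convex, and for $\phi_1\ne\phi_2\in S_{E^*}$ with norm-one average and any $v\in S_F$ the operators $\phi_i\otimes v$ work.

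\emph{Main case.} It remains to treat $E$ smooth and $F$ strictly convex, where both rank-one tricks provably fail and a rank-two perturbation is needed, modelled on the segment $\operatorname{diag}(1,t)$, $|t|\le 1$, in the sphere of $\LL(\ell_2^2,\ell_2^2)$. Fix auxiliary Euclidean structures. Let $e_*$ be a contact point of the smallest Euclidean ball containing $B_E$; smoothness makes the contact tangential, so the support functional $\phi\in S_{E^*}$ at $e_*$ satisfies $\phi<1$ on $B_E\setminus\{e_*\}$ with a quadratic loss $1-\phi(e)\gtrsim\norm{e-e_*}^2$ inherited from the enclosing sphere. Let $w_*$ be a contact point of the largest Euclidean ball inside $B_F$, a smooth point at which that inner ball guarantees quadratic room in the tangent direction $w'$. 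With $\psi$ normalising the coordinate complementary to $e_*$ (so $\psi(e_*)=0$), set
\[ T_b=\phi\otimes w_*+b\,\psi\otimes w', \]
so $T_be_*=w_*$ for all $b$. The inner/outer ball estimates sandwich the inequality $\norm{T_b}\le 1$ between two Hilbertian computations of the type $\alpha^2+b^2\beta^2\le 1$ on a region inside $\{\alpha^2+c\,\beta^2\le 1\}$, which holds for all small $b$; combined with $w_*\in S_F$ this yields $\norm{T_b}=1$ for $|b|\le b_0$. Then $T_{b_0}\ne T_{-b_0}$ average to $T_0=\phi\otimes w_*\in S_{\LL(E,F)}$, so $\LL(E,F)$ is not strictly convex. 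The main obstacle is exactly this case: a general smooth strictly convex plane body admits no norm-preserving rotation of the range, so one cannot simply rotate to stay on the sphere, and the circumscribed/inscribed ball choice of $e_*,w_*$ is what forces the loss along $\phi$ to dominate the gain in the perturbed direction; the reductions and easy cases are routine.
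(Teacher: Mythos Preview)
Your argument is correct, and its core --- the ``main case'' with circumscribed and inscribed Euclidean balls and a one-parameter family $T_b$ fixing $e_*\mapsto w_*$ --- is precisely the paper's proof. The paper, however, runs that construction uniformly with no case split: at a contact point $e_*$ of the circumscribed ellipsoid of $B_E$, the Euclidean tangent functional automatically supports $B_E$ whether or not $E$ is smooth, and the inscribed ellipsoid of $B_F$ already supplies all the ``room'' you need whether or not $F$ is strictly convex. Choosing linear maps $T_{X_0},T_{Y_0}$ taking the two ellipsoids to the standard Euclidean ball with $e_*,w_*\mapsto e_1$, the paper sets $R_\alpha=T_{Y_0}^{-1}\circ\operatorname{diag}(1,\alpha)\circ T_{X_0}$ and verifies $\norm{R_\alpha}\le 1$ through the single chain of inclusions
\[
R_\alpha(B_E)\subset R_\alpha(D_E)=T_{Y_0}^{-1}\operatorname{diag}(1,\alpha)(B_{\ell_2^2})\subset T_{Y_0}^{-1}(B_{\ell_2^2})=D_F\subset B_F,
\]
valid for every $|\alpha|\le 1$, not merely for small $\alpha$. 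So your ``easy cases'' are redundant, your quadratic-loss/quadratic-room heuristics can be replaced by this elementary inclusion, and your ``small $b$'' becomes the full interval $[-1,1]$ (up to the normalisation of $\psi$). Your reduction step --- embedding $X^*\iten Y\hookrightarrow\KK(X,Y)$ isometrically and then passing to two-dimensional subspaces via the injectivity of $\iten$ --- differs from the paper's quotient-and-subspace embedding $T\mapsto \iota\circ T\circ Q$ but is equally valid.
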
 

Recall that a sequence $\E=(E_n)_n$ of finite dimensional subspaces of $X$ is call a \emph{finite dimensional decomposition} (FDD for short) if every $x\in X$ has a unique representation of the form $x=\sum_{n=1}^\infty x_n$, with $x_n\in E_n$ for every $n$. Every FDD of $X$ determines a sequence of uniformly bounded projections $(P_n^\E)_n$ given by $P_n^\E(\sum_{i=1}^\infty x_i) = \sum_{i=1}^n x_i$. The number $K=\sup\{\norm{P_n^\E}\}$ is called the decomposition constant of the FDD. An FDD is called \emph{monotone} if $K=1$. Moreover, an FDD is called \emph{shrinking} if $\lim_n \norm{P_n^*x^*-x^*}=0$ for every $x^*\in X^*$, and it is called \emph{boundedly complete} if $\sum_{n=1}^\infty x_n$ converges whenever $x_n\in E_n$ for each $n$ and $\sup_n \norm{\sum_{i\leq n} x_i}<+\infty$. We say that $\F=(F_n)_n$ is a \emph{blocking} of $\E$ if there exists an increasing sequence $(m_n)_n\subset \mathbb N$ such that $m_1=0$ and $F_n = \bigoplus_{i=m_{n}+1}^{m_{n+1}} E_i$ for every $n$. For detailed treatment and applications of FDDs, we refer the reader to~\cite{LT77}.

This paper is organized as follows. In the second section we introduce a notion of asymptotic moduli with respect to a norming subspace, which includes the usual asymptotic moduli, and we give a formula for these moduli in spaces having an FDD. This formula motivates the definition of strongly AUS and strongly AUC spaces, which is given in the third section together with their basic properties.  In the fourth section we show that the injective tensor product of strongly AUS spaces is strongly AUS, which allow us to prove Theorem~\ref{th:usitenaus} and~\ref{th:uskaus}. The fifth section is devoted to the study strong asymptotic uniform smoothness and convexity in the particular case of Orlicz and Lorentz sequence spaces, including the proof of Theorem~\ref{th:kAUSorlicz}. Finally, Section 6 includes the proof of Proposition~\ref{th:KKstrictlyconvex}.

\section{On \texorpdfstring{$F$}{F}-AUC and \texorpdfstring{$F$}{F}-AUS spaces}\label{sec:fauc}

Given $F$ a norming subspace of $X^*$, let us denote by $\sigma(X,F)$ the coarsest topology on $X$ with respect to which every element of $F$ is continuous. We shall introduce a general concept of $F$-AUC and $F$-AUS norms.

\begin{definition} Let $F$ be a norming subspace of $X^*$. For $t>0$ and $x\in S_X$, we define
\begin{align*}
\d_X^F(t,x) &= \sup_{\substack{\dim(X/Y)<\infty\\ Y \sigma(X,F)\text{-closed}}}\inf_{y\in S_Y} \norm{x+ty}-1;\\
\p_X^F(t,x) &= \inf_{\substack{\dim(X/Y)<\infty\\ Y \sigma(X,F)\text{-closed}}}\sup_{y\in S_Y} \norm{x+ty}-1\,.
\end{align*}
The corresponding moduli are defined as follows
\begin{align*}
\d_X^F(t) &= \inf_{x\in S_X}\d_X^F(t,x), & \p_X^F(t) &=\sup_{x\in S_X}\p_X^F(t,x) 
\end{align*}
The space $X$ is said to be \emph{$F$-asymptotically uniformly convex} if $\d_X^F(t)>0$ for each $t>0$ and it is said to be \emph{$F$-asymptotically uniformly smooth} if $\lim_{t\to 0} t^{-1}\p_X^F(t) = 0$.
\end{definition}

Note that $\d_X= \d_X^{X^*}$, $\p_X= \p_X^{X^*}$ and $\d_X^* = \d_{X^*}^X$. Thus, a space $X$ is AUC (resp. AUS) if and only if it is $X^*$-AUC (resp. $X^*$-AUS), and $X^*$ is weak* AUC if and only if it is $X$-AUC. 

Dutrieux showed in~\cite[Lemma 37]{D02} that if $X^*$ is separable then the modulus of asymptotic smoothness admits the following sequential expression: 
\begin{equation*} \p_X(t,x) = \sup_{\substack{x_n\stackrel{w}{\to}0\\ \norm{x_n}\leq t}} \limsup_{n\to\infty} \norm{x+x_n}-1 \,.
\end{equation*}
In addition, Borel-Mathurin proved in~\cite{BM10} that a similar statement for the weak* modulus of asymptotic convexity of $X^*$ holds when $X$ is separable. Namely,
\begin{equation*} \d^{*}_{X}(t,x^*) = \inf_{\substack{x^*_n\stackrel{w^*}{\to}0\\ \norm{x^*_n}\geq t}} \liminf_{n\to\infty} \norm{x^*+x^*_n}-1 \,.
\end{equation*}

The same ideas can be used to prove the following result, which can be seen as a general version of both formulas. Note that a finite codimensional subspace $Y$ of $X$ is $\sigma(X,F)$-closed if and only if there are $f_1,\ldots, f_n\in F$ such that $Y = \bigcap_{i=1}^n \ker f_i$. Moreover, if $(x_\alpha)_\alpha$ is a $\sigma(X,F)$-null net in $X$ then $\lim_\alpha d(x_\alpha, Y) = 0$ for each finite codimensional $\sigma(X,F)$-closed subspace $Y$ of $X$. Following~\cite{DKLR16}, we will consider the set $\mathcal C$ of finite codimensional $\sigma(X,F)$-closed subspaces of $X$ as a directed set with the order $\preceq$ given by $E\preceq F$ if $F\subset E$. 

\begin{proposition}\label{prop:seqMod} Let $F$ be a norming subspace of $X^*$. For each $x\in S_X$ and $t>0$ we have:
\begin{align*} 
\d_X^F(t,x) &= \inf_{\substack{x_\alpha\stackrel{\sigma(X,F)}{\longrightarrow}0\\ \norm{x_\alpha}\geq t}} \liminf_{\alpha} \norm{x+x_\alpha}-1;\\
\p_X^F(t,x) &= \sup_{\substack{x_\alpha\stackrel{\sigma(X,F)}{\longrightarrow}0\\ \norm{x_\alpha}\leq t}} \limsup_{\alpha} \norm{x+x_\alpha}-1\,.
\end{align*}
If moreover $F$ is separable then
\begin{align*} 
\d_X^F(t,x) &= \inf_{\substack{x_n\stackrel{\sigma(X,F)}{\longrightarrow}0\\ \norm{x_n}\geq t}} \liminf_{n\to\infty} \norm{x+x_n}-1;\\
\p_X^F(t,x) &= \sup_{\substack{x_n\stackrel{\sigma(X,F)}{\longrightarrow}0\\ \norm{x_n}\leq t}} \limsup_{n\to\infty} \norm{x+x_n}-1\,.
\end{align*}
\end{proposition}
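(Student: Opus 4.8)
The plan is to prove the two formulae for $\p_X^F$; the statements for $\d_X^F$ follow by the same argument with the roles of $\sup$ and $\inf$ (and of $\limsup$ and $\liminf$) interchanged, so I will only indicate the changes at the end. Fix $x\in S_X$ and $t>0$, write $L=\p_X^F(t,x)=\inf_{Y\in\mathcal C}\sup_{y\in S_Y}(\norm{x+ty}-1)$, and let $R$ denote the right-hand side of the net formula. I will show $R\le L$ and $L\le R$ separately, using throughout the two facts recorded before the statement: a finite-codimensional subspace lies in $\mathcal C$ precisely when it is a finite intersection of kernels of elements of $F$, and every $\sigma(X,F)$-null net $(x_\alpha)$ satisfies $\lim_\alpha d(x_\alpha,Y)=0$ for each $Y\in\mathcal C$.

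For $R\le L$, fix $\eta>0$ and choose $Y\in\mathcal C$ with $\sup_{y\in S_Y}\norm{x+ty}\le 1+L+\eta$. By convexity of $s\mapsto\norm{x+sy}$ together with $\norm{x}=1$, this yields $\norm{x+z}\le 1+L+\eta$ for every $z\in Y$ with $\norm{z}\le t$. Now let $(x_\alpha)$ be any $\sigma(X,F)$-null net with $\norm{x_\alpha}\le t$. Using $\lim_\alpha d(x_\alpha,Y)=0$, pick $y_\alpha\in Y$ with $\norm{x_\alpha-y_\alpha}\to 0$; then $\norm{y_\alpha}\le t+\varepsilon_\alpha$ with $\varepsilon_\alpha\to0$, and after rescaling $y_\alpha$ to norm $\le t$ (which costs at most $\varepsilon_\alpha$ in norm, since the norm is $1$-Lipschitz) I obtain $\norm{x+y_\alpha}\le 1+L+\eta+\varepsilon_\alpha$. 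Hence $\limsup_\alpha\norm{x+x_\alpha}\le 1+L+\eta$, and letting $\eta\to0$ and taking the supremum over all such nets gives $R\le L$.

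For $L\le R$, I construct a single $\sigma(X,F)$-null net realising $L$. Index by the directed set $\mathcal C\times\N$, ordered by $(Y,k)\preceq(Y',k')$ iff $Y'\subseteq Y$ and $k\le k'$. Since each $Y\in\mathcal C$ satisfies $\sup_{y\in S_Y}(\norm{x+ty}-1)\ge L$, choose $y_{Y,k}\in S_Y$ with $\norm{x+ty_{Y,k}}\ge 1+L-1/k$ and set $x_{Y,k}=ty_{Y,k}$, so $\norm{x_{Y,k}}=t$. Given $f\in F$, the subspace $\ker f$ lies in $\mathcal C$, so for every index beyond $(\ker f,1)$ one has $y_{Y,k}\in Y\subseteq\ker f$ and thus $f(x_{Y,k})=0$; hence $(x_{Y,k})$ is $\sigma(X,F)$-null. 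Cofinally in $k$ the values $\norm{x+x_{Y,k}}-1$ exceed $L-1/k$, so $\limsup\norm{x+x_{Y,k}}-1\ge L$, giving $R\ge L$. This proves the net formula for $\p_X^F$; for $\d_X^F$ one fixes a near-optimal $Y$ with $\inf_{y\in S_Y}\norm{x+ty}$ large, uses the secant form of convexity to bound $\norm{x+z}$ from below for $z\in Y$ with $\norm{z}\ge t$, and builds the net from near-minimisers, replacing $\limsup$ by $\liminf$.

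Finally, suppose $F$ is separable and fix a sequence $(f_k)$ dense in $S_F$. Put $Y_n=\bigcap_{k\le n}\ker f_k\in\mathcal C$. Since every sequence is a net, the net formula already gives one inequality; for the other I repeat the construction of the preceding paragraph along the decreasing sequence $(Y_n)_n$ rather than over all of $\mathcal C$. Concretely, I pick $y_n\in S_{Y_n}$ with $\norm{x+ty_n}\ge 1+L-1/n$ and set $x_n=ty_n$. For fixed $k$ one has $f_k(x_n)=0$ once $n\ge k$, and a density-plus-boundedness estimate (the sequence is norm-bounded by $t$) upgrades this to $f(x_n)\to0$ for every $f\in F$; thus $(x_n)$ is $\sigma(X,F)$-null with $\norm{x_n}=t$ and $\limsup_n\norm{x+x_n}-1\ge L$. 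The convexity case is identical with the inequalities reversed. The main obstacle is this construction direction: one must manufacture a $\sigma(X,F)$-null net (respectively sequence) that nearly attains the modulus while meeting the norm constraint, and the bridge between the topological convergence and the geometric definition is exactly the recorded fact that null nets approach every $Y\in\mathcal C$, together with the convexity estimates needed to reconcile the constraint $\norm{x_\alpha}\le t$ with the rescalings.
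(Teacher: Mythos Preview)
Your proof is correct and follows essentially the same approach as the paper's: one direction is obtained by approximating an arbitrary $\sigma(X,F)$-null net inside a fixed near-optimal subspace $Y\in\mathcal C$ (using $d(x_\alpha,Y)\to 0$ and a convexity/rescaling adjustment), and the other by building a canonical $\sigma(X,F)$-null net out of near-optimisers chosen in each $Y\in\mathcal C$. The only cosmetic differences are that you argue for $\p_X^F$ while the paper writes out $\d_X^F$, and that you fold the $\varepsilon$-approximation into the index set $\mathcal C\times\N$ whereas the paper indexes by $\mathcal C$ alone and lets $\varepsilon\to 0$ afterwards; neither changes the substance of the argument.
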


\begin{proof}
We will prove the first formula for $\d_X^F(t,x)$, since the proof of the one for $\p_X^F(t,x)$ is similar. Let us consider 
\[ \theta(t,x)=\inf_{\substack{x_\alpha\stackrel{\sigma(X,F)}{\longrightarrow}0\\\norm{x_\alpha}\geq t}} \liminf_\alpha \norm{x+x_\alpha}-1\,.\]
Fix $\varepsilon>0$. For each finite codimensional $\sigma(X,F)$-closed subspace $Z$ of $X$, take  $x_Z\in S_Z$ so that $\norm{x+tx_Z}\leq \inf_{y\in S_Z} \norm{x+ty}+\varepsilon$. Note that the net $(x_Z)_{Z\in \mathcal C}$ is $\sigma(X,F)$-convergent to $0$. Indeed, given $f\in F$ we have that $f(x_Z)=0$ whenever $Z\subset \ker f$. Thus, 
\[ \theta(t,x)\leq \liminf_{Z\in\mathcal C} \norm{x+tx_Z}-1 \leq \d_X^F(t,x)+\varepsilon\,.\]
Letting $\varepsilon\to 0$, we get $\theta(t,x)\leq \d_X^F(t,x)$. Now, take $(x_\alpha)_\alpha$ a $\sigma(X,F)$-null net such that $\norm{x_\alpha}\geq t$ for each $\alpha$. Fix $\varepsilon>0$ and take $Y$ a finite codimensional $\sigma(X,F)$-closed subspace of $X$. Then $\lim_\alpha d(x_\alpha, Y)= 0$, so there exists a net $(y_\alpha)_\alpha$ in $Y$ and $\alpha_0$ so that if $\alpha\geq\alpha_0$ then $\norm{x_\alpha-y_\alpha}\leq \varepsilon$. Thus $\norm{y_\alpha}\geq t-\varepsilon$ whenever $\alpha\geq \alpha_0$. Moreover, from the convexity of the function $t\mapsto \norm{x+ty_\alpha}-1$ we get that
\[ \norm{x+x_\alpha}-1 \geq \norm{x+y_\alpha}-1-\varepsilon 
\geq \frac{\norm{y_\alpha}}{t-\varepsilon}\left(\norm{x+\frac{t-\varepsilon}{\norm{y_\alpha}} y_\alpha}-1\right)-\varepsilon.
\]
It follows that $\liminf_\alpha \norm{x+x_\alpha}-1 \geq \d_X^F(t-\varepsilon,x,Y)-\varepsilon$. That inequality holds for every finite codimensional $\sigma(X,F)$-closed subspace $Y$ of $X$ and every $\varepsilon>0$. Since the function $t\mapsto \d_X^F(t,x)$ is $1$-Lipschitz, we get
\[ \liminf_\alpha \norm{x+x_\alpha}-1 \geq \d_X^F(t,x)\,,\]
as desired. 

Finally, assume that $F$ is separable. From what we have already proved it follows
\begin{align*} 
\d_X^F(t,x) &\leq \inf_{\substack{x_n\stackrel{\sigma(X,F)}{\longrightarrow}0\\ \norm{x_n}\geq t}} \liminf_{n\to\infty} \norm{x+x_n}-1\,,\\
\p_X^F(t,x) &\geq \sup_{\substack{x_n\stackrel{\sigma(X,F)}{\longrightarrow}0\\ \norm{x_n}\leq t}} \limsup_{n\to\infty} \norm{x+x_n}-1\,.
\end{align*}
Let $\{f_n:n\in\N\}$ be a dense sequence in $F$. Let us consider the finite codimensional $\sigma(X,F)$-closed subspaces of $X$ given by $Y_n = \bigcap_{i=1}^n \ker f_i$, for each $n\in \N$. Fix $0<\varepsilon<t$. For every $n$, take $x_n, y_n\in Y_n$ such that $\norm{x_n}=\norm{y_n}= 1$ and
\begin{align*}
 \norm{x+tx_n} &\leq \inf_{y\in S_{Y_n}} \norm{x+ty}+ \varepsilon \leq \d_X^F(x,t)+1+\varepsilon\,, \\
 \norm{x+ty_n} &\geq \sup_{y\in S_{Y_n}} \norm{x+ty}- \varepsilon \geq \p_X^F(x,t)+1-\varepsilon\,.
 \end{align*}
It is easy to check that the sequences $(x_n)_n$ and $(y_n)_n$ are $\sigma(X,F)$-null. Since $\varepsilon$ was arbitrary, this finishes the proof.
\end{proof}

For the norming subspaces that we will consider in the next sections, the norm will be $\sigma(X,F)$-lower semicontinuous. In view of Proposition~\ref{prop:seqMod}, that condition guarantee that $\d_X^F$ and $\p_X^F$ are non-negative functions. 

It is easy to show that, if $\E$ is an FDD for $X$, then $F = \cvspan\{(P_n^\E)^* X^*: n\in \N\}$ is a norming subspace of $X^*$. Moreover, if $\E$ is monotone then $F$ is $1$-norming and $\norm{\cdot}$ is $\sigma(X,F)$-lower semicontinuous. 

\begin{proposition}\label{prop:FDDmod}  Let $\E$ be a monotone FDD for a Banach space $X$ and $F = \cvspan\{(P_n^\E)^* X^* : n\in \N\}$. For each $t>0$ we have:
\begin{align*}
\d_X^{F}(t) &= \inf_{n\in\N}\sup_{m\geq n}\inf\{\norm{x+ty}-1: x\in H_n\cap S_X, y\in H^m\cap S_X\}\,,\\
\p_X^{F}(t) &= \sup_{n\in\N}\inf_{m\geq n}\sup\{\norm{x+ty}-1: x\in H_n\cap S_X, y\in H^m\cap S_X\}
\end{align*}
where $H_n = \bigoplus_{i=1}^n E_i$ and $H^n = \closure{\bigoplus_{i=n+1}^\infty E_i}$ for each $n\in \N$. 
\end{proposition}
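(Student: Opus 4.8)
The plan is to derive the two formulas from the sequential (net) characterisation in Proposition~\ref{prop:seqMod}, applied to the norming subspace $F = \cvspan\{(P_n^\E)^* X^* : n\in\N\}$. Since $\E$ is a monotone FDD, $F$ is $1$-norming and the norm is $\sigma(X,F)$-lower semicontinuous, so the moduli are well-behaved. The crucial structural fact is that a net $(x_\alpha)_\alpha$ is $\sigma(X,F)$-null precisely when $f(x_\alpha)\to 0$ for every $f\in F$, and because $F$ is generated by the adjoints $(P_n^\E)^*$, this is equivalent to $P_n^\E x_\alpha \to 0$ (in the finite-dimensional space $H_n$) for every fixed $n$. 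Geometrically, a $\sigma(X,F)$-null net eventually lives, up to small error, in the tail subspaces $H^m$; conversely any unit vector in $H^m$ with $m$ large is close to being $\sigma(X,F)$-small in the sense that its initial coordinates vanish. This dictionary between $\sigma(X,F)$-null behaviour and the tail decomposition $X = H_n \oplus H^m$ is what converts the abstract inf/sup over $\sigma(X,F)$-closed finite-codimensional subspaces into the concrete double inf/sup over $H_n$ and $H^m$.

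\emph{Lower semicontinuity and the smoothness formula.} I would first establish the $\p_X^F$ identity. Fix $x\in S_X$. By Proposition~\ref{prop:seqMod} (net version), $\p_X^F(t,x) = \sup \limsup_\alpha \norm{x+x_\alpha}-1$ over $\sigma(X,F)$-null nets with $\norm{x_\alpha}\le t$. Write $x = \sum_i x^{(i)}$ with $x^{(i)}\in E_i$; for any $\varepsilon>0$ there is $n$ with $\norm{x - P_n^\E x}<\varepsilon$, so $P_n^\E x$ is essentially the relevant ``head'' of $x$. Given a $\sigma(X,F)$-null net, its coordinates $P_m^\E x_\alpha$ tend to $0$ for each fixed $m$, so for large $m$ the vector $x_\alpha$ is, in the limit, supported on $H^m$; passing to the tail we may replace $x_\alpha$ by its projection into $H^m$ at the cost of an arbitrarily small error. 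Matching the head $H_n$ of $x$ against perturbations $y\in H^m\cap S_X$ (after rescaling $\norm{x_\alpha}\le t$ to unit vectors and using the $1$-Lipschitz continuity of $t\mapsto \p_X^F(t,x)$) yields $\p_X^F(t,x) = \inf_{m\ge n}\sup\{\norm{x+ty}-1 : y\in H^m\cap S_X\}$ for any fixed $n$ with $\norm{x-P_n^\E x}$ small. Taking the supremum over $x\in S_X$ and organising the quantifiers gives the stated double formula; the $\d_X^F$ formula follows by the symmetric argument, replacing sup/limsup by inf/liminf.

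\emph{Reduction of the supremum over unit vectors to vectors in the heads $H_n$.} The genuinely delicate point, and the step I expect to be the main obstacle, is justifying that the outer supremum (resp.\ infimum) over all $x\in S_X$ can be replaced by a supremum over $x\in H_n\cap S_X$ with the \emph{same} index $n$ that governs the tail $H^m$, thereby producing the clean expression $\sup_n\inf_{m\ge n}\sup\{\dots: x\in H_n\cap S_X,\, y\in H^m\cap S_X\}$. This requires an approximation argument: an arbitrary $x\in S_X$ is approximated in norm by $P_n^\E x$, and one must control both how the perturbation term $\norm{x+ty}$ changes when $x$ is replaced by $P_n^\E x / \norm{P_n^\E x}$, and how the choice of the threshold index in $\sigma(X,F)$-nullity interacts with the index $n$ defining the head. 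Monotonicity of the FDD ($K=1$) is essential here, since it guarantees $\norm{P_n^\E}=1$ and hence that truncation to $H_n$ and the orthogonal-type splitting $\norm{x} \ge \norm{P_n^\E x}$ introduce no constant losses; one tracks the errors through the $1$-Lipschitz dependence of the moduli on $t$ and lets $\varepsilon\to 0$. Once this interchange of the head index is carried out carefully for both moduli, the remaining manipulations — unfolding the definitions and reindexing the infima and suprema — are routine.
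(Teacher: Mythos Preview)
Your overall strategy coincides with the paper's: reduce to the sequential description of Proposition~\ref{prop:seqMod}, observe that a sequence is $\sigma(X,F)$-null exactly when $P_n^\E$ of it tends to zero for each fixed $n$, note that each $H^m=\ker P_m^\E$ is a $\sigma(X,F)$-closed finite-codimensional subspace, and exploit the density of $\bigcup_n H_n$ in $X$. The paper does precisely this, and from it one obtains immediately, for each fixed $x\in H_n\cap S_X$, the identity
\[
\p_X^F(t,x)=\inf_{m\ge n}\sup_{y\in H^m\cap S_X}\norm{x+ty}-1.
\]

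Where your diagnosis goes astray is in locating the ``genuinely delicate point''. Replacing the supremum over $x\in S_X$ by one over $x\in \bigcup_n H_n\cap S_X$ is \emph{not} the hard step: it is an immediate consequence of density and the $1$-Lipschitz dependence of the modulus on $x$ (the paper dispatches it in one line, citing~\cite[Lemma~1]{GJT07}). The real obstacle, which your sketch does not address, is the interchange of $\sup_{x\in H_n\cap S_X}$ and $\inf_{m\ge n}$. From the pointwise formula one gets
\[
\sup_{x\in H_n\cap S_X}\p_X^F(t,x)=\sup_{x\in H_n\cap S_X}\inf_{m\ge n}\sup_{y\in H^m\cap S_X}\norm{x+ty}-1,
\]
and one needs this to equal $\inf_{m\ge n}\sup_{x\in H_n\cap S_X}\sup_{y\in H^m\cap S_X}\norm{x+ty}-1$. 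In general $\sup_x\inf_m\le\inf_m\sup_x$ with possible strict inequality; what saves the day here is that $H_n\cap S_X$ is \emph{compact}. The paper carries this out by contradiction: assuming a gap, one first shows (via the sequential formula) that for each individual $x\in H_n\cap S_X$ there is some $m(x)$ with $\sup_{y\in H^{m(x)}}\norm{x+ty}-1<\rho$, then takes a finite $\varepsilon$-net $\{x_1,\dots,x_k\}$ in $H_n\cap S_X$ and sets $m=\max_i m(x_i)$ to obtain a uniform tail index. Your third paragraph speaks only of approximation and truncation errors and never invokes finite-dimensionality or compactness; without that ingredient the quantifier swap is unjustified and the argument has a genuine gap.
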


\begin{proof}
 We prove only the statement concerning $\p_X^{F}$ as the other one is similar. Since $\cup_n H_n \cap S_X$ is dense in $S_X$, we have that $\p_X^{F}(t) = \sup_{n\in\N}\sup_{x\in H_n\cap S_X}\p_X^{F}(t,x)$ (see~\cite[Lemma 1]{GJT07}). Thus, it suffices to show that 
 \[\sup_{x\in H_n\cap S_X}\p_X^{F}(t,x) = \inf_{m\geq n}\sup\{\norm{x+ty}-1: x\in H_n\cap S_X, y\in H^m\cap S_X\}\]
 for each $n\in \N$. First notice that each $H^m$ is a $\sigma(X,F)$-closed subspace of $X$. Indeed, since $H^m= \ker P_m^\E$, it suffices to check that $P_m^\E$ is $\sigma(X,F)$-continuous for every $m\in \mathbb N$. For that, take a net $(x_\alpha)_\alpha$ that is $\sigma(X,F)$-converging to a vector $x\in X$. Then $\lim_\alpha (P_n^\E)^*(x^*)(x_\alpha)= (P_n^\E)^*(x^*)(x)$ for each $x^*\in X^*$, so $(P_n^\E x_\alpha)_\alpha$ is $\sigma(X,X^*)$-convergent to $P_n^\E(x)$. Since $P_n^\E(X)$ is finite-dimensional, it follows that $(P_n^\E x_\alpha)_\alpha$ is also norm-convergent. This shows that $P_m^\E$ is $\sigma(X,F)$-continuous and so $H^m$ is $\sigma(X,F)$-closed. Therefore, 
  \[\sup_{x\in H_n\cap S_X}\p_X^{F}(t,x) \leq \inf_{m\geq n}\sup\{\norm{x+ty}-1: x\in H_n\cap S_X, y\in H^m\cap S_X\}\]
Now, fix $n\in\N$. Assume that 
\[  \sup_{x\in H_n\cap S_X}\p_X^{F}(t,x) < \rho < \rho+\varepsilon < \inf_{m\geq n}\sup\{\norm{x+ty}-1: x\in H_n\cap S_X, y\in H^m\cap S_X\}\]
 for some $\rho,\varepsilon>0$. We claim that for each $x\in H_n$ there exists $m=m(x)>n$ such that $\norm{x+ty} < 1+\rho$ for each $y\in H^m\cap S_X$. To see this, assume that there exist $x\in H_n$ and a sequence $(y_m)_m$ so that $y_m\in H^m\cap S_X$ and $\norm{x+ty_m}\geq 1+\rho$ whenever $m\geq n$. Note that $F$ is separable and the sequence $(y_m)_m$ is $\sigma(X,F)$-null. Therefore, the sequential formula for the modulus given in Proposition~\ref{prop:seqMod} yields 
\[\rho\leq\limsup_{m\to\infty}\norm{x+ty_m}-1\leq \p_X^F(t,x)<\rho\,,\]
which is a contradiction. This proves the claim. 
 Now pick $\{x_i\}_{i=1}^k$ an $\varepsilon$-net in $H_n\cap S_X$, take $m=\max\{m(x_i):i=1\ldots,k\}$ and let $x\in H_n$ and $y\in H^m$ be norm-one vectors. There exists $i$ such that $\norm{x-x_i}\leq \varepsilon$. Then,
 \[ \norm{x+ty}-1\leq \norm{x_i+ty}-1+\varepsilon \leq \rho+\varepsilon\,,\]
 which is a contradiction.
\end{proof}

Let us recall that if $\E=(E_n)_n$ is a monotone FDD in $X$ with associated projections $(P_n^\E)_n$, then $\Ed = ((P_n^\E - P_{n-1}^\E)^*X^*)_n$ is an FDD for $F=\cvspan\{(P_n^{\E})^* X^*: n\in\N\}$ with associated projections given by $P_n^\Ed = (P_n^\E)^*$. Note that if $\E$ is shrinking then $F=X^*$ and $\Ed$ is boundedly complete. Proposition~\ref{prop:FDDmod} provides a formula for the asymptotic moduli in spaces admitting a monotone shrinking FDD.

\begin{corollary}\label{cor:FDDmod}  Let $X$ be a Banach space admitting a monotone shrinking FDD $\E$. For each $t>0$ we have:
\begin{align*}
\d_X(t) &= \inf_{n\in\N}\sup_{m\geq n}\inf\{\norm{x+ty}-1: x\in P_n^\E(X) \cap S_X, y\in \ker P_m^\E\cap S_X\}\,,\\
\p_X(t) &= \sup_{n\in\N}\inf_{m\geq n}\sup\{\norm{x+ty}-1: x\in P_n^\E(X) \cap S_X, y\in \ker P_m^\E \cap S_X\}\,,\\
\d^*_X(t) &= \inf_{n\in\N}\sup_{m\geq n}\inf\{\norm{x^*+ty^*}-1: x^*\in (P_n^{\Ed}X^*) \cap S_X, y^*\in \ker P_m^{\Ed}\cap S_X\}\,.
\end{align*}
\end{corollary}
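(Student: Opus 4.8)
The plan is to deduce all three identities from Proposition~\ref{prop:FDDmod} by selecting the appropriate norming subspace in each case, the point being that for a \emph{shrinking} monotone FDD the abstract $F$-moduli collapse onto the genuine asymptotic moduli.

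For the first two formulas I would use that, since $\E$ is shrinking, $F = \cvspan\{(P_n^\E)^* X^*: n\in\N\} = X^*$, so that $\d_X^F = \d_X^{X^*} = \d_X$ and $\p_X^F = \p_X^{X^*} = \p_X$. Recalling that $H_n = P_n^\E(X)$ and $H^m = \ker P_m^\E$, the stated expressions for $\d_X$ and $\p_X$ are then literally the formulas for $\d_X^F$ and $\p_X^F$ supplied by Proposition~\ref{prop:FDDmod}.

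For the third formula I would apply Proposition~\ref{prop:FDDmod} to the dual space $X^*$ equipped with the FDD $\Ed$. Since $\E$ is monotone we have $\norm{P_n^\Ed} = \norm{(P_n^\E)^*} = \norm{P_n^\E} = 1$, so $\Ed$ is a monotone FDD of $X^*$ (it decomposes all of $F = X^*$ precisely because $\E$ is shrinking). Proposition~\ref{prop:FDDmod} then produces a formula for $\d_{X^*}^G$, where $G = \cvspan\{(P_n^\Ed)^* X^{**}: n\in\N\}$, with ranges $P_n^\Ed(X^*)$ and kernels $\ker P_m^\Ed$.

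The crucial step, and the only one demanding care, is to identify $G$ with the canonical copy of $X$ inside $X^{**}$, for then $\d_{X^*}^G = \d_{X^*}^X = \d_X^*$ and the proof is complete. To do this I would use that $P_n^\Ed = (P_n^\E)^*$, whence $(P_n^\Ed)^* = (P_n^\E)^{**}$. A one-line computation with the finite-rank representation $P_n^\E(\cdot) = \sum_i x_i^*(\cdot)\,x_i$ shows that its bitranspose has range $\vspan\{x_i\} = H_n$, regarded inside $X \subset X^{**}$ (the range being finite-dimensional, hence reflexive). Consequently $G = \cvspan\{H_n: n\in\N\} = \closure{\bigcup_n H_n} = X$, the final equality being the density of $\bigcup_n H_n$ in $X$ guaranteed by the FDD. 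Substituting $\d_X^* = \d_{X^*}^G$ and rewriting the ranges $P_n^\Ed(X^*)$ and kernels $\ker P_m^\Ed$ yields the asserted expression for $\d_X^*$. Everything outside this identification of $G$ with $X$ is a direct specialisation of Proposition~\ref{prop:FDDmod}.
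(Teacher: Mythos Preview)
Your argument is correct and matches the paper's intended derivation: the corollary is presented without proof, relying on the paragraph preceding it (if $\E$ is shrinking then $F=X^*$) together with Proposition~\ref{prop:FDDmod}, and the identification of $\cvspan\{(P_n^{\Ed})^*X^{**}\}$ with $X$ that you carry out is exactly the one the paper later spells out in the proof of Proposition~\ref{prop:SAUSduality}. There is nothing to add.
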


\section{On strongly AUC and strongly AUS spaces}\label{sec:sauc}

The following definition is motivated by the formulae obtained in Proposition~\ref{prop:FDDmod} and Corollary~\ref{cor:FDDmod}. 

\begin{definition} Let $X$ a Banach space and let $\E=(E_n)_n$ be an FDD for $X$. Denote $H_n = \bigoplus_{i=1}^n E_i$ and $H^n = \closure{\bigoplus_{i=n+1}^\infty E_i}$. The space $X$ is said to be \emph{strongly AUC with respect to $\E$} if the modulus defined by
\[ \sd{\E}(t) =  \inf_{n\in\N}\sup_{m\geq n}\inf\{\norm{x+ty}-1: x\in H_m\cap S_X, y\in H^m\cap S_X\} \]
satisfies that $\sd{\E}(t) > 0$ for each $t>0$. In addition, $X$ is said to be \emph{strongly AUS with respect to $\E$} if 
\[ \sp{\E}(t) =  \sup_{n\in\N}\inf_{m\geq n}\sup\{\norm{x+ty}-1: x\in H_m\cap S_X, y\in H^m\cap S_X\}\]
satisfies $\lim_{t\to 0} t^{-1} \sp{\E}(t)=0$. 
Finally, we say that $X$ is \emph{strongly AUS} (resp.\ \emph{strongly AUC}) if $X$ is strongly AUS (resp.\ strongly AUC) with respect to some FDD. 
\end{definition}

Since $\max\{\norm{x+y},\norm{x-y}\}\geq \norm{x}$ for each $x,y\in X$, it follows that $\sp{\E}(t)\geq 0$ for each $t$. Moreover, if $\E$ is monotone then $\sd{\E}(t)\geq 0$. It is clear that functions $\sp{\E}$ and $\sd{\E}$ are $1$-Lipschitz functions and $\sd{\E}(t) \leq \sp{\E}(t)\leq t$ for all $t$. For notational convenience let us set
\begin{align*}
\sd{\E}(t,m) &= \inf\{\norm{x+ty}-1: x\in H_m\cap S_X, y\in H^m\cap S_X\}\,,\\ 
\sp{\E}(t,m) &=  \sup\{\norm{x+ty}-1: x\in H_m\cap S_X, y\in H^m\cap S_X\}\,.
\end{align*}
Note that if $\F$ is a blocking of $\E$ then for each $m$ there is $k_m\geq m$ so that $\sd{\F}(t,m)=\sd{\E}(t,k_m)$ and $\sp{\F}(t,m)=\sp{\E}(t,k_m)$. Thus, $\sd{\F}(t) \leq \sd{\E}(t)$ and $\sp{\F}(t)\geq \sp{\E}(t)$. In particular, $X$ is strongly AUC (resp.\ strongly AUS) with respect to $\E$ whenever it is strongly AUC (resp.\ strongly AUS) with respect to some blocking of $\E$.

Remark that, in the above definitions, we only compute the norms $||x+ty||$ for vectors $x$ and $y$ which belong to \emph{complementary} subspaces, that is, $x\in H_m$ and $y\in H^m$ for a certain $m$. This is why we called this notions strong AUS and strong AUC. Indeed, 
as a consequence of Corollary~\ref{cor:FDDmod} we obtain the following:

\begin{corollary} \label{cor:SAUSimplyAUS} Let $\E$ be a monotone shrinking FDD for a Banach space $X$. Then $\d_X(t) \geq \sd{\E}(t)$, $\p_X(t)\leq \sp{\E}(t)$ and $\d_{X}^* (t)\leq \sd{\E^*}(t)$. Thus, $X$ is AUC (resp. AUS) whenever it is strongly AUC (resp. strongly AUS) with respect to $\E$, and $X^*$ is weak* AUC whenever it is strongly AUC with respect to $\E^*$. 
\end{corollary}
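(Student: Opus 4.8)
The plan is to obtain all three inequalities as immediate consequences of the formulas in Corollary~\ref{cor:FDDmod}, comparing them term by term with the definitions of the strong moduli. The first thing I would record is a purely structural observation: since $P_n^\E(X)=H_n$ and $\ker P_m^\E=H^m$, the expressions for $\d_X$ and $\p_X$ in Corollary~\ref{cor:FDDmod} and those for $\sd{\E}$ and $\sp{\E}$ in the definition are built from \emph{the same} quantities $\norm{x+ty}-1$, and differ only in the admissible domain of the first vector. In the usual moduli this vector is constrained by the outer index ($x\in H_n$ with $n\le m$), while in the strong moduli it is constrained by the inner index ($x\in H_m$). As $n\le m$ forces $H_n\subseteq H_m$, everything reduces to comparing one extremal value over nested sets.

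Concretely, I would fix $t>0$ and $n\le m$ and write the inner extremal values
\[
\d_X(t,n,m)=\inf\{\norm{x+ty}-1: x\in H_n\cap S_X,\ y\in H^m\cap S_X\},
\]
together with $\p_X(t,n,m)$ defined by the corresponding supremum. Because $H_n\cap S_X\subseteq H_m\cap S_X$, enlarging the range of $x$ from $H_n$ to $H_m$ can only decrease an infimum and increase a supremum, so
\[
\sd{\E}(t,m)\le\d_X(t,n,m),\qquad \p_X(t,n,m)\le\sp{\E}(t,m)\qquad(n\le m).
\]
This single monotonicity is, in effect, the whole proof.

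Next I would propagate these pointwise bounds through the lattice operations, keeping the order of $\inf$ and $\sup$ exactly as in Corollary~\ref{cor:FDDmod}. For the convexity modulus, applying $\sup_{m\ge n}$ and then $\inf_n$ to the left inequality yields $\sd{\E}(t)\le\d_X(t)$; for the smoothness modulus, applying $\inf_{m\ge n}$ and then $\sup_n$ to the right inequality yields $\p_X(t)\le\sp{\E}(t)$. The dual statement is the convexity argument run verbatim for the FDD $\Ed$ on $X^*$: recalling $\d_X^*=\d_{X^*}^X$, Corollary~\ref{cor:FDDmod} expresses it through the blocks $P_n^{\Ed}X^*$ and $\ker P_m^{\Ed}$, and the same nesting gives $\sd{\Ed}(t)\le\d_X^*(t)$. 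Each inequality thus points in the direction that makes the corresponding implication trivial: $\sd{\E}(t)>0$ forces $\d_X(t)>0$, so $X$ is AUC; $\lim_{t\to0}t^{-1}\sp{\E}(t)=0$ together with $0\le t^{-1}\p_X(t)\le t^{-1}\sp{\E}(t)$ forces $X$ to be AUS; and $\sd{\Ed}(t)>0$ forces $\d_X^*(t)>0$, so $X^*$ is weak* AUC.

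I do not expect a genuine analytic difficulty here, since the argument is essentially bookkeeping; the care goes into the structural set-up. One must verify the domain identities ($P_n^\E X=H_n$, $\ker P_m^\E=H^m$ and their $\Ed$ analogues) and respect the precise quantifier order, as interchanging an $\inf$ with a $\sup$ would destroy the comparison. The only place the hypotheses really enter is the dual inequality: the representation of $\d_X^*$ via the dual FDD $\Ed$ in Corollary~\ref{cor:FDDmod} relies on $\E$ being monotone and shrinking (equivalently, on $\Ed$ being a boundedly complete FDD of $X^*$ with $X$ as a norming predual), and it is precisely this that legitimises comparing the weak* modulus with $\sd{\Ed}$.
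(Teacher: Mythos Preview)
Your proposal is correct and is exactly the argument the paper has in mind: the corollary is stated without proof, the preceding sentence saying only that it is ``a consequence of Corollary~\ref{cor:FDDmod}'', and your term-by-term comparison via the inclusion $H_n\subseteq H_m$ is precisely how that consequence is obtained. One remark: you derive $\sd{\Ed}(t)\le\d_X^*(t)$, which is the inequality actually needed for the weak* AUC implication; the reversed inequality $\d_X^*(t)\le\sd{\Ed}(t)$ printed in the statement is a typo, and your argument gives the correct direction.
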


\begin{example}
\begin{enumerate}[leftmargin=*]
\item[a)] Let $X=(\bigoplus_{n=1}^\infty E_n)_p$ be an $\ell_p$-sum of finite dimensional spaces, $1\leq p<\infty$, and consider $\E = (E_n)_{n=1}^\infty$. Then $\sd{\E}(t)=\sp{\E}(t)=(1+t^p)^{1/p}-1$. Thus, $X$ is strongly AUC with respect to $\E$. If moreover $p>1$ then it is strongly AUS with respect to $\E$. 
\item[b)] Let $X=(\bigoplus_{n=1}^\infty E_n)_0$ be a $c_0$-sum of finite dimensional spaces, and $\E = (E_n)_{n=1}^\infty$. Then $\sp{\E}(t)=0$ for each $t\in(0,1]$, so $X$ is strongly AUS with respect to $\E$.
\item[c)] Consider the James space $J$ endowed with the norm
\[ \norm{(x_n)_{n=1}^\infty}^2 = \sup_{1\leq n_1<\ldots<n_{2m+1}} \sum_{i=1}^m(x_{n_{2i-1}}-x_{n_{2i}})^2+ 2x_{n_{2m+1}}^2 \]
given in~\cite{P99} and let $\E$ be the standard basis of $J$. Then $\norm{x+y}^2\leq \norm{x}^2+2\norm{y}^2$ whenever $x\in H_n$ and $y\in H^n$ for some $n$. Thus, $\sp{\E}(t)\leq (1+2t^2)^{1/2}-1$, so $J$ is strongly AUS with respect to $\E$.
\item[d)] Let $T$ be a well-founded tree in $\omega^{<\omega}$. The James Tree space $JT$ consists of all real functions defined on $T$, with the norm
\[ \norm{x}^2 = \sup\sum_{j=1}^n\left(\sum_{t\in S_j} x(t)\right)^2\]
where the supremum is taken over all finite sets of pairwise disjoint segments in $T$. Lancien proved in~\cite[Proposition 4.6]{L92} that there exists a basis $\E=(e_n)_n$ of $JT$ and an increasing sequence $(n_k)_k$ such that if $x\in \vspan\{e_1,\ldots, e_{n_k}\}$ and $y\in \cvspan\{e_i : i>n_k\}$ then $\norm{x+y}^2\geq \norm{x}^2+\norm{y}^2$. Therefore $JT$ is strongly AUC with respect to $\E$ and $\sd{\E}(t)\geq (1+t^2)^{1/2}-1$.
\end{enumerate}
\end{example} 

Recall that the \emph{modulus of convexity} of a Banach space $X$ is defined by
\[ \delta_X(t) = \inf\{1-\norm{\frac{x+y}{2}} : x,y\in S_X, \norm{x-y}=t\}\,,\]
and the \emph{modulus of smoothness} of $X$ is defined by
\[\rho_X(t) = \frac{1}{2}\sup\{\norm{x+ty}+\norm{x-ty}-2 : x,y\in S_X\}\,.\]

\begin{proposition}\label{prop:UCimplySAUC} Let $\E$ be a monotone FDD for a Banach space $X$. Then $\delta_X(t)\leq\sd{\E}(t)$ and $\sp{\E}(t)\leq 2\rho_X(t)$ for each $t>0$. Thus, if $X$ is uniformly convex (resp. uniformly smooth) then it is strongly AUC (resp. strongly AUS) with respect to $\E$. 
\end{proposition}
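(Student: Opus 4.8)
The plan is to prove the two estimates separately, reducing each to a pointwise inequality for fixed $x\in H_m\cap S_X$ and $y\in H^m\cap S_X$, and then pass to the infima and suprema defining $\sd{\E}$ and $\sp{\E}$. The one structural fact I would use throughout is monotonicity: since $x\in H_m$ and $ty\in H^m=\ker P_m^\E$, applying the norm-one projection $P_m^\E$ to $x\pm ty$ gives $\norm{x+ty}\geq\norm{x}=1$ and $\norm{x-ty}\geq 1$. Both estimates then amount to quantifying how far above $1$ these norms must lie.

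For the smoothness inequality I would argue directly. For $x\in H_m\cap S_X$ and $y\in H^m\cap S_X$, the fact $\norm{x-ty}\geq 1$ gives
\[ \norm{x+ty}-1\leq(\norm{x+ty}-1)+(\norm{x-ty}-1)=\norm{x+ty}+\norm{x-ty}-2\leq 2\rho_X(t), \]
the last inequality being the definition of $\rho_X$. Taking the supremum over admissible $x,y$ yields $\sp{\E}(t,m)\leq 2\rho_X(t)$ for every $m$, and hence $\sp{\E}(t)\leq 2\rho_X(t)$.

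For the convexity inequality I would show that $\norm{x+ty}\geq 1+\delta_X(t)$ for every such pair, which gives $\sd{\E}(t,m)\geq\delta_X(t)$ and therefore $\delta_X(t)\leq\sd{\E}(t)$. Here I would exploit that the convex function $s\mapsto\norm{x+sy}$ attains its minimum value $1$ at $s=0$ (again by monotonicity); consequently $0$ lies in its subdifferential, so there is a norming functional $f\in S_{X^*}$ with $f(x)=1$ and $f(y)=0$. Writing $a=\norm{x+ty}$ and $u=(x+ty)/a\in S_X$, the functional $f$ controls the relevant midpoint from below, since $f\bigl(\tfrac{x+u}{2}\bigr)=\tfrac12(1+\tfrac1a)$, while the modulus of convexity applied to the unit vectors $x$ and $u$ gives $\norm{\tfrac{x+u}{2}}\leq 1-\delta_X(\norm{x-u})$. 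Comparing these two and rearranging turns a lower bound on $\norm{x-u}$ into the desired lower bound on $a$.

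The main obstacle is the distance estimate $\norm{x-u}$, and with it the sharp constant. Since $x-u=a^{-1}\bigl((a-1)x-ty\bigr)$, the reverse triangle inequality gives $\norm{x-u}\geq a^{-1}(t-(a-1))$, which is useful precisely when $a$ is close to $1$; when $a$ is bounded away from $1$ the conclusion is immediate because $a-1$ is already large. A further subtlety is that the infimum defining $\sd{\E}(t,m)$ ranges over both $y$ and $-y$, so one must bound $\min\{\norm{x+ty},\norm{x-ty}\}$ rather than the (easier) maximum; this is exactly why I route the argument through the functional $f$ with $f(y)=0$, which is insensitive to the sign of $y$, rather than through the more obvious pairing of $x+ty$ with $x-ty$. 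Carrying out the case analysis carefully is what produces the clean bound $\delta_X(t)$, and it is the step I expect to require the most attention.
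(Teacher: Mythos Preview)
Your smoothness half is exactly the paper's argument.

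For the convexity half, your way of producing the functional $f$ (via the subdifferential of $s\mapsto\norm{x+sy}$ at its minimum $s=0$) is correct but slightly roundabout; the paper simply takes any $x^*\in S_{X^*}$ with $x^*(x)=1$ and sets $y^*=x^*\circ P_m^\E$, which by monotonicity has $\norm{y^*}=1$, $y^*(x)=1$ and $y^*(y)=0$. Either route gives the same functional.

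The real divergence is in the pair you feed into $\delta_X$. You use $(x,u)$ with $u=(x+ty)/a$, and then have to estimate $\norm{x-u}=a^{-1}\norm{(a-1)x-ty}$ from below; the reverse triangle inequality only gives $\norm{x-u}\ge\frac{t-(a-1)}{a}$, and combining this with $\delta_X(\norm{x-u})\le\frac{a-1}{2a}$ yields
\[
\delta_X\!\Bigl(\tfrac{t-(a-1)}{a}\Bigr)\le\tfrac{a-1}{2a},
\]
which does \emph{not} imply $\delta_X(t)\le a-1$ without extra regularity of $\delta_X$ that you have not justified (when $a-1$ is small the argument on the left is strictly below $t$, and $\delta_X$ need not be well-behaved there). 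So the promised ``case analysis'' does not close the gap to the stated sharp inequality; at best it gives $\delta_X(ct)\le C\,\sd{\E}(t)$ for some constants, which suffices for the qualitative conclusion but not for $\delta_X(t)\le\sd{\E}(t)$.

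The paper avoids this entirely by choosing instead the pair $u$ and $v:=u-ty$, so that $\norm{u-v}=t$ \emph{exactly}. Then, using the same functional,
\[
\delta_X(t)\ \le\ 1-\tfrac12\norm{u+v}\ \le\ 1-\tfrac12\,y^*(u+v)\ =\ 1-\tfrac{1}{a}\ \le\ a-1,
\]
since $y^*(u+v)=2y^*(u)=2/a$. This one-line computation gives the sharp bound with no case analysis. The fix to your outline is therefore not analytic but structural: translate by $ty$ from $u$ rather than comparing $u$ to $x$.
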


\begin{proof}
We will use the same arguments that appear in Proposition 2.3.(3) in~\cite{JLPS02}. From the monotony of $\E$ follows that 
\[ \frac{1}{2}(\norm{x+ty}-1) \leq \frac{1}{2}(\norm{x+ty}+\norm{x-ty})-1 \]
whenever $x\in H_n\cap S_X$ and $y\in H^n\cap S_X$ for some $n\in \N$. Thus, $\sp{\E}(t)\leq 2\rho_X(t)$.
Now, fix $n\in\N$ and take $x\in H_n\cap S_X$ and $y\in H^n\cap S_X$. Let $x^*\in S_{X^*}$ be such that $x^*(x)=1$. Then $y^* = x^*\circ P_n^*$ satisfies $\norm{y^*}= 1$, $y^*(x)=1$ and $y^*(y)=0$. Let us consider $u= \frac{x+ty}{\norm{x+ty}}$ and $v=u-ty$. Then $u,v\in B_X$ and $\norm{u-v}=t$. Thus,
\[ \delta_X(t) \leq 1 - \frac{1}{2}\norm{u+v}\leq 1-\frac{1}{2}y^*(u+v) = 1-\frac{1}{\norm{x+ty}}\leq \norm{x+ty}-1 \]
and so $\delta_X(t) \leq \sd{E}(t)$.
\end{proof}

Our next result establishes the duality between strongly AUS and strongly AUC norms by using estimates similar to those in~\cite{DKLR16}. Recall that, given a continuous function $f:[0,1]\to[0,+\infty)$ with $f(0)=0$, its \emph{dual Young function} is defined by
\[ f^*(s)=\sup\{st-f(t) : 0\leq t\leq 1 \}\,. \]

\begin{proposition}\label{prop:SAUSduality} Let $\E=(E_n)_n$ be a monotone FDD for a Banach space $X$ and let $\Ed$ be the dual FDD for $F=\cvspan\{(P_n^{\E})^* X^*:n\in \N\}$ given above. Take $0<s,t<1$. Then:
\begin{itemize}
\item[a)] If $\sp{\E}(s)<st$, then $\sd{\Ed}(3t)\geq st$.
\item[b)] If $\sd{\Ed}(t)>st$, then $\sp{\E}(s)\leq st$. 
\end{itemize}
Therefore, $\sp{\E}^*$ is equivalent to $\sd{\Ed}$ and $\sp{\Ed}$ is equivalent to $\sd{\E}^*$.
\end{proposition}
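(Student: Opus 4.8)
The plan is to read a) and b) as the two halves of a local polarity argument between the FDD $\E$ of $X$ and the dual FDD $\Ed$ of $F$, and then to deduce the two equivalences from the definition of the dual Young function. Everything rests on two structural facts forced by the monotonicity of $\E$. Writing $H_m=\bigoplus_{i\le m}E_i$, $H^m=\closure{\bigoplus_{i>m}E_i}$ and, for the dual FDD, $\G_m=\ran(P_m^\E)^*=P_m^\Ed(F)$ and $\G^m=\ker(P_m^\E)^*$, these are: (i) \emph{orthogonality} — every $x^*\in\G_m$ annihilates $H^m$ and every $y^*\in\G^m$ annihilates $H_m$, which is immediate from $H^m=\ker P_m^\E$ and the definitions of $\G_m,\G^m$; and (ii) \emph{norming} — restriction identifies $\G_m$ isometrically with $(H_m)^*$, while each $y^*\in\G^m$ is normed by vectors of $H^m$ only up to the factor $\norm{I-P_m^\E}\le 2$.

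For a) I would fix $s,t$ with $\sp{\E}(s)<st$ and, given a level $k$, use the hypothesis to choose $m\ge k$ with $\norm{x+sy}\le 1+st$ for all $x\in H_m\cap S_X$, $y\in H^m\cap S_X$; taking the matching level $l=m$ for $\Ed$, I then bound $\norm{x^*+3ty^*}$ from below for arbitrary unit $x^*\in\G_m$, $y^*\in\G^m$. The test vector is $z=\lambda u+\mu v$, with $u\in H_m\cap S_X$ nearly norming $x^*$ and $v\in H^m$ nearly norming $y^*$; fixing the ratio $\mu\norm{v}=s\lambda$ lets the smoothness estimate give $\norm{z}\le(1+st)\lambda$, so $\lambda=(1+st)^{-1}$ keeps $z\in B_X$. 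By (i) the cross terms drop out and $(x^*+3ty^*)(z)=\lambda\,x^*(u)+3t\mu\,y^*(v)$; optimising produces a lower bound of the shape $(1+3ts/\norm{v})/(1+st)$, and the role of the constant $3$, together with $st<1$, is exactly to push this past $1+st$ once the norming constant of (ii) is controlled. This gives $\sd{\Ed}(3t)\ge st$. Part b) is the dual computation: expanding $\norm{x+sy}=\sup_{f\in B_F}f(x+sy)$, splitting $f=(P_m^\E)^*f+(f-(P_m^\E)^*f)$ and using (i) to write $f(x+sy)=f_1(x)+s f_2(y)$, the homogeneous form of the hypothesis $\sd{\Ed}(t)>st$ bounds the supremum by $1+st$.

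The equivalences then come from a) and b) by an elementary Legendre-type estimate on the dual Young function $(\sp{\E})^*(\sigma)=\sup\{\sigma u-\sp{\E}(u):0\le u\le 1\}$. Evaluating at a near-extremal point, a) yields $(\sp{\E})^*(t)\le\sd{\Ed}(3t)$ and b) yields $\sd{\Ed}(t)\le(\sp{\E})^*(2t)$, so $\sd{\Ed}$ and $(\sp{\E})^*$ sandwich one another up to a scaling of the argument and are therefore equivalent. The second equivalence $\sp{\Ed}\approx\sd{\E}^*$ follows by running the same argument with the roles of $\E$ and $\Ed$ interchanged — using the canonical isometric embedding $X\hookrightarrow F^*$ under which the dual FDD of $\Ed$ corresponds to $\E$ — and then passing to Young conjugates, since these moduli agree with their biconjugates up to equivalence.

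The step I expect to be the main obstacle is the constant bookkeeping hidden in (ii): the head functionals are normed isometrically on $H_m$, but for a merely monotone (not bimonotone) FDD the tail functionals of $\G^m$ are normed on $H^m$ only up to $\norm{I-P_m^\E}\le 2$. It is this asymmetry, rather than the duality itself, that dictates the explicit constants — the $3t$ in a) and the matched coefficients in b) — and it has to be absorbed carefully so that the two inequalities still close up to equivalence in the final sandwich.
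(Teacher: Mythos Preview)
Your approach is essentially the paper's. For a) the paper evaluates $f+3tg$ on $(x+sy)/(1+st)$ with $x\in H_m\cap S_X$ nearly norming $f$ and $y\in H^m\cap S_X$ chosen to make $g(y)$ large, obtaining $\norm{f+3tg}\ge(1+3st\,g(y))/(1+st)$. For b) it takes a norming functional $z^*$ for $x+sy$, splits $z^*=f+g$ along $\G_m,\G^m$, and uses the hypothesis on $\sd{\Ed}$ through the convex combination
\[
f+\tfrac{t}{\norm g}\norm{f}\,g=\tfrac{t}{\norm g}\norm{f}\,(f+g)+\bigl(1-\tfrac{t}{\norm g}\norm{f}\bigr)f
\]
to force $\norm{f}\le 1-s\norm{g}$, which contradicts $f(x)+sg(y)>1+st$. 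Your one-line sketch of b) (``the homogeneous form of the hypothesis bounds the supremum by $1+st$'') hides this step; from $f_1(x)+sf_2(y)$ alone you only get the bound $\norm{f_1}+s\norm{f_2}$, and it is precisely the displayed convex-combination trick that turns the AUC hypothesis into $\norm{f_1}+s\norm{f_2}\le 1$.

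On the point you single out as the main obstacle you are right to be cautious, and in fact more careful than the paper: the paper simply asserts $\norm{g}=\sup\{g(y):y\in H^m\cap S_X\}$ ``by monotony'', which is false without bimonotonicity. With the honest factor $\tfrac12$ your lower bound in a) becomes $(1+\tfrac32 st)/(1+st)$, and this is \emph{not} $\ge 1+st$; so the constant $3$ does not literally absorb the norming constant $2$, contrary to your claim that ``the role of the constant $3$\dots is exactly to push this past $1+st$''. One needs roughly $6t$ in place of $3t$ in the merely monotone case. This does not touch the equivalence $(\sp{\E})^*\approx\sd{\Ed}$, which is what the proposition is really after, and part b) is immune to the issue since there only the trivial inequality $g(y)\le\norm g$ is used.
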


\begin{proof}
As usual, let us consider $H_n = \bigoplus_{i=1}^n E_i$ and $H^n = \closure{\bigoplus_{i=n+1}^\infty E_i}$. In order to prove $a)$, assume that $\sp{\E}(s)<st$ and fix $\varepsilon>0$ and $n\in \N$. Take $m\geq n$ so that $\sp{\E}(s,m) < st$. Let $f\in P_m^\Ed (X^*)\cap S_X^*$ and $g\in \ker P_n^\Ed\cap S_{X^*}$. We will estimate $\norm{f+3tg}$. Note that, by the monotony of $\E$, there exists $x\in H_m\cap S_X$ such that $f(x)>1-\varepsilon$. Now take $y\in H^m\cap S_X$. We have $\norm{x+s y}<1+st$. Thus,
\begin{eqnarray*}
\norm{f+3t g}&\geq & \frac{1}{1+st}(f+3t g)(x+s y) \\
&=& \frac{1}{1+st}(f(x)+3st g(y))\geq \frac{1-\varepsilon+3st g(y)}{1+st}
\end{eqnarray*}
From the monotony of $\E$, it follows that $\norm{g}=\sup\{g(y): y\in H^n\cap S_X\}$. Thus,
\[ \norm{f+3t g} \geq \frac{1-\varepsilon+3st}{1+st}\]
Hence,
\[ \sd{\Ed}(3t,m) \geq \frac{1-\varepsilon+3st}{1+st}-1\,. \]
Since $\varepsilon$ is arbitrary, we get that for every $n$ there exists $m\geq n$ so that $\sd{\Ed}(3t,m)\geq st$. Therefore $\sd{\Ed}(3t)\geq st$, as desired. 
 
Now we turn to the proof of $b)$. Assume that $\sd{\Ed}(t)> st$ and $\sp{\E}(s)>st$. Then there exist $\rho>st$ and $n\in \N$ such that $\inf_{m\geq n} \sp{\E}(t,m) > \rho$. Moreover, there is $m\geq n$ so that $\sd{\Ed}(t,m)>st$. Now take $x\in H_m\cap S_X$ and $y\in H^m\cap S_X$ satisfying 
\[ 1+st < 1+\rho < \norm{x+s y} \,.\]
Let $z^*\in S_{X^*}$ be such that $z^*(x+s y)=\norm{x+s y}$. Take $f=P_m^\Ed z^*$, $g=(I-P_m^\Ed) z^*$ and $c=\norm{g}$. Since $\E$ is monotone, we get that
\[ 1+st< z^*(x+s y) = f(x)+g(s y)\leq 1 + c s\,. \]
Thus, $t<c$. We claim that $\norm{f}\leq 1-c s$. Since $\sd{\Ed}(t,m)> st$, we get that
\[
(1+st)\norm{f} < \norm{ f+\frac{t}{c}\norm{f}g} 
\leq  \frac{t}{c}\norm{f} \norm{f+g} + (1-\frac{t}{c}\norm{f})\norm{f}\,. \\
\]
Hence, $1+st< \frac{t}{c} + (1-\frac{t}{c}\norm{f})$. That proves the claim. Now, 
\[ 1+st < z^*(x+sy) = f(x)+g(sy) \leq 1-cs +cs = 1\,,\]
which is a contradiction. 

Finally, a standard argument shows from what we have already proved that $\sd{\Ed}(t/2) \leq \sp{\E}^*(t) \leq \sd{\Ed}(3t)$, so $\sp{\E}^*$ is equivalent to $\sd{\Ed}$. On the other hand, it is easy to check that if $P$ is a norm-one projection on $X$ with finite-dimensional range, then $P^{**}(X^{**})$ is isometric to $P(X)$. Thus, $X=\cvspan\{(P_n^{\Ed})^*(X^{**})\}$ and $\E^{**}$ may be identified with $\E$. By applying the previous formula to $\Ed$ we get that $\sd{\E}$ is equivalent to $\sp{\Ed}^*$, which finishes the proof. 
\end{proof}

\begin{corollary} \label{cor:SAUSduality} Let $X$ be a Banach space with a monotone shrinking FDD $\E$. Then $X$ is strongly AUS (resp. strongly AUC) with respect to $\E$ with power type $p$ if and only if $X^*$ is strongly AUC (resp. strongly AUS) with respect to the dual FDD $\Ed$ with power type $p'$, the conjugate exponent of $p$.
\end{corollary}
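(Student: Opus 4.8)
The plan is to read the corollary off Proposition~\ref{prop:SAUSduality}, turning its scale-invariant inequalities into statements about power functions. Before any estimate I would fix the two reductions that make the statement meaningful. First, since $\E$ is shrinking we have $F=\cvspan\{(P_n^\E)^*X^*:n\in\N\}=X^*$, so $\Ed$ is a genuine (boundedly complete) FDD \emph{of $X^*$} and the moduli $\sp{\Ed},\sd{\Ed}$ of the proposition are exactly the strong asymptotic moduli of $X^*$ with respect to $\Ed$. Second, I would unwind the power-type hypotheses: $X$ is strongly AUS with respect to $\E$ with power type $p$ means $\sp{\E}\preceq t^p$, and $X^*$ is strongly AUC with respect to $\Ed$ with power type $p'$ means $\sd{\Ed}\succeq t^{p'}$, with the analogous reading for the convex modulus of $X$ and the smooth modulus of $X^*$. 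Throughout I assume $1<p<\infty$, so that $p'$ is finite and the conjugacy identities $(p-1)(p'-1)=1$, $(p-1)+1=p$ and $(p'-1)+1=p'$ are at hand; note that for such $p$ a power bound already forces the qualitative strong AUS (resp.\ AUC) property.

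For the first equivalence I would argue straight from parts a) and b), which bound $\sd{\Ed}$ and $\sp{\E}$ pointwise. If $\sp{\E}(s)\le Cs^p$, then for small $t$ the choice $s=(t/2C)^{p'-1}$ gives $\sp{\E}(s)\le Cs^p=\tfrac{1}{2}st<st$, so a) yields $\sd{\Ed}(3t)\ge st=(2C)^{-(p'-1)}t^{p'}$; the harmless rescaling $3t\mapsto t$ then gives $\sd{\Ed}\succeq t^{p'}$. Conversely, if $\sd{\Ed}(t)\ge ct^{p'}$, then $ct^{p'}>st$ for every $t>(s/c)^{p-1}$, so b) gives $\sp{\E}(s)\le st$; letting $t\downarrow(s/c)^{p-1}$ yields $\sp{\E}(s)\le c^{-(p-1)}s^p$, that is $\sp{\E}\preceq t^p$. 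The second equivalence comes from running the identical computation after applying Proposition~\ref{prop:SAUSduality} to the FDD $\Ed$ of $X^*$, whose own dual FDD $\E^{**}$ is identified with $\E$ as in the proof of that proposition; this produces the mirror inequalities ``$\sp{\Ed}(s)<st\Rightarrow\sd{\E}(3t)\ge st$'' and ``$\sd{\E}(t)>st\Rightarrow\sp{\Ed}(s)\le st$'', whence $\sd{\E}\succeq t^p\iff\sp{\Ed}\preceq t^{p'}$ follows exactly as before.

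Two points need care. The routine one is that every estimate above is valid only for small arguments, so that the moduli are evaluated inside $(0,1]$; passing from ``for small $t$'' to ``for all $t\in(0,1]$'' costs only a constant, since the moduli are nondecreasing and $1$-Lipschitz, so a power lower bound near $0$ propagates to the whole interval, and likewise for upper bounds. The genuine subtlety --- and the step I would be most careful not to mishandle --- is that one should \emph{not} shortcut the converse implications through the Young-conjugate equivalence $\sp{\E}^*\sim\sd{\Ed}$ and a formal rule ``$f\preceq t^p\iff f^*\succeq t^{p'}$''. The forward direction of that rule holds for any $f$, but its converse controls only the biconjugate $f^{**}\le f$, and $\sp{\E}$ --- a supremum over $n$ of infima over $m$ of convex functions --- need not be convex, so $\sd{\Ed}\succeq t^{p'}$ says nothing about possible spikes of $\sp{\E}$. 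The pointwise inequalities a) and b) bound $\sp{\E}$ and $\sd{\E}$ themselves, and this is precisely what closes that gap.
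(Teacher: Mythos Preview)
Your proposal is correct and is precisely the argument the paper has in mind: the corollary is stated without proof immediately after Proposition~\ref{prop:SAUSduality}, and your use of parts a) and b) with the specific choices $s=(t/2C)^{p'-1}$ and $t\downarrow(s/c)^{p-1}$ is the intended ``standard argument''. Your observation that the Young-conjugate equivalence $\sp{\E}^*\sim\sd{\Ed}$ alone would not close the converse direction (because $\sp{\E}$ need not be convex) is a genuine and well-spotted subtlety that the paper glosses over; going through the pointwise inequalities a) and b) is indeed the clean way to avoid it.
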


Given an FDD $\E$ for $X$, an element $x\in X$ is said to be a \emph{block} of $\E$ if $x = P_n^\E x$ for some $n$. The interval
\[ \ran_\E x = [\max\{n: P_n^\E x = 0\}+1, \min\{n : P_n^\E x=x\}]\]
is called the \emph{range} of the block $x$. Given $1\leq p,q\leq \infty$, it is said that $\E$ \emph{satisfies $(p,q)$-estimates} if there exists a constant $C>0$ such that
\[ \frac{1}{C}\left(\sum_{i=1}^n \norm{x_i}^p\right)^{1/p} \leq \norm{\sum_{i=1}^n x_i} \leq C\left(\sum_{i=1}^n \norm{x_i}^q\right)^{1/q} \]
for all finite sequences $x_1,\ldots, x_n$ with $\ran_\F x_i \cap \ran_\F x_j = \emptyset$ for every $i\neq j$.
 
The next result is based on a similar one given by Prus in~\cite{P89} for NUS spaces.

\begin{proposition} \label{prop:AUSestimate} Let $\E$ be an FDD for a Banach space $X$. 
\begin{itemize}
\item[a)] If $X$ is strongly AUS with respect to $\E$ then there is a blocking $\F=(F_n)_n$ of $\E$ satisfying $(\infty,q)$-estimates for some $1<q<\infty$.  
\item[b)] If $\E$ is monotone and $X$ is strongly AUC with respect to $\E$ then there is a blocking $\F=(F_n)_n$ of $\E$ satisfying $(1,p)$-estimates for some $1<p<\infty$. 
\end{itemize}
\end{proposition}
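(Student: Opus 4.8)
The plan is to adapt Prus's argument from~\cite{P89}, replacing the near-uniform-smoothness modulus by the complementary-block modulus $\sp{\E}$. Throughout I use two elementary facts already recorded. First, rescaling the defining quantity $\sp{\E}(t,m)$ yields the two-block inequality
\[ \norm{x+y}\le \norm{x}\Bigl(1+\sp{\E}\bigl(\tfrac{\norm{y}}{\norm{x}},m\bigr)\Bigr)\qquad (x\in H_m,\ y\in H^m), \]
valid for an arbitrary FDD and nonzero $x$. Second, since $\inf_{m\ge n}\sp{\E}(t,m)$ increases to $\sp{\E}(t)$, for every $n$ and every $t$ there are arbitrarily large $m\ge n$ with $\sp{\E}(t,m)\le \sp{\E}(t)+\varepsilon$. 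For part a) the lower half of an $(\infty,q)$-estimate, namely $\max_i\norm{x_i}\le 2K\norm{\sum_i x_i}$ with $K$ the decomposition constant, is automatic from the uniform boundedness of the projections and survives any blocking; so the entire problem is to produce a blocking for which the upper $\ell_q$-estimate $\norm{\sum_i x_i}\le C(\sum_i\norm{x_i}^q)^{1/q}$ holds.

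\textbf{Part a).} Since $X$ is strongly AUS with respect to $\E$ we have $\sp{\E}(t)/t\to 0$, so I would fix once and for all a scale $a\in(0,1)$ and a contraction $\theta\in(0,1)$ with $\sp{\E}(a)\le\theta a$, and then define $q>1$ by the requirement $a^{\,q-1}=\theta$, so that $a$ plays for the target exponent $q$ the role it plays in $\ell_q$. Using the second fact above, I would choose block boundaries $0=m_0<m_1<m_2<\cdots$ so that, writing $F_j=\bigoplus_{i=m_{j-1}+1}^{m_j}E_i$, the single-scale modulus seen from the $j$-th boundary is small, $\sp{\E}(a,m_j)\le\theta a+\eta_j$ with $(\eta_j)_j$ summable. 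This is the step where the $\sup_n\inf_{m\ge n}$ shape of $\sp{\E}$ is essential: it is what converts the merely $o(t)$ behaviour of the single-FDD modulus into uniform control along the blocking $\F=(F_j)_j$.

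With the blocking fixed, I would prove the upper $\ell_q$-estimate for a finite block sequence $u_1,\dots,u_N$ with increasing ranges by peeling. Applying the two-block inequality to $u_1$ against $u_2+\cdots+u_N$, then to $u_2$ against the remainder, and so on, produces a telescoping product of factors $1+\sp{\E}(\cdot,m_j)$; composing the single-scale estimate across consecutive blocks reaches the scales $a^{\ell}$, and the relation $a^{\,q-1}=\theta$ turns the resulting geometric decay into $\norm{\sum_i u_i}^q\le C^q\sum_i\norm{u_i}^q$, the accumulated error being controlled by $\prod_j(1+\eta_j)<\infty$. The main obstacle is precisely this last induction: the two-block inequality only bites in the regime where the peeled-off tail is not much larger than the current block, whereas for large ratios one has only the trivial bound $\sp{\E}(s,m)\le s$. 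Handling blocks of widely differing sizes — by grouping indices into dyadic size ranges and peeling the largest blocks first, so that the genuine power-type estimate always applies — is the delicate point, and is where the value of $q$ gets pinned down; this is where I expect to spend most of the effort.

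\textbf{Part b).} Here I would argue by duality rather than repeat the construction. Assume $\E$ monotone and let $\Ed$ be the dual FDD of $F=\cvspan\{(P_n^{\E})^*X^*:n\in\N\}$. Strong AUC of $X$ with respect to $\E$ means $\sd{\E}(t)>0$ for every $t>0$; a direct computation with the dual Young function then gives $\sd{\E}^*(t)/t\to 0$, and since $\sp{\Ed}$ is equivalent to $\sd{\E}^*$ by Proposition~\ref{prop:SAUSduality}, the space $F$ is strongly AUS with respect to $\Ed$. Applying part a) to $\Ed$ produces a blocking $\G$ of $\Ed$ satisfying $(\infty,q)$-estimates for some $q\in(1,\infty)$. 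This $\G$ is the dual FDD of the blocking $\F$ of $\E$ determined by the same boundaries, and the monotonicity of $\E$ supplies the isometric biorthogonality between the blocks of $\F$ and those of $\G$; the standard duality between upper and lower block estimates (see~\cite{LT77}) then turns the upper $\ell_q$-estimate for $\G$ into a lower $\ell_{q'}$-estimate for $\F$. Taking $p=q'\in(1,\infty)$ yields the asserted $(1,p)$-estimate, the companion upper $\ell_1$-bound being just the triangle inequality.
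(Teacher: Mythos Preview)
Your overall architecture matches the paper's exactly: Prus's modulus argument for a), and the duality of Proposition~\ref{prop:SAUSduality} followed by the standard $\ell_q$/$\ell_{q'}$ block-estimate duality for b). For b) there is nothing to add; the paper carries out the very computation you cite from~\cite{LT77}, choosing functionals $f_i$ supported on $\ran_{\F} x_i$ and testing $\sum x_i$ against $\sum \norm{x_i}^{1/(q-1)} f_i$.

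For a), however, your inline ``peeling'' sketch is both pointed the wrong way and incomplete. The two-block inequality you wrote down has the \emph{early} block in the denominator, so applying it ``to $u_1$ against $u_2+\cdots+u_N$'' yields $\norm{\sum u_i}\le \norm{u_1}\bigl(1+\sp{\E}(\norm{u_2+\cdots+u_N}/\norm{u_1})\bigr)$, which is useless unless the tail is small; the iteration must go from the right, peeling $u_N$ off the partial sum $S_{N-1}$. More seriously, the passage from a single-scale bound $\sp{\E}(a)\le\theta a$ to a genuine upper $\ell_q$-estimate for arbitrary block sequences is precisely the content of Gurarii's lemma, and your ``dyadic grouping, largest first'' remark is a promissory note for that argument rather than a proof. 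The paper sidesteps all of this: it fixes $t$ with $\sp{\E}(t)<t/2$, chooses the blocking so that $\sp{\E}(t,m_n)<t/2$ outright (no summable errors are needed, since $\sp{\E}(t)=\sup_n\inf_{m\ge n}\sp{\E}(t,m)$ gives arbitrarily large good $m$), picks $q>1$ and $\nu>0$ so that $(1+\alpha-t/2)^q<1+\alpha^q$ for $|1-\alpha|<\nu$, checks the local two-block inequality $\norm{x+\alpha y}\le(1+\alpha^q)^{1/q}$ in that range, and then invokes the proof of Gurarii's theorem (e.g.\ \cite[Lemma~9.26]{FHHMZ11}) verbatim. You should do the same rather than reinvent that lemma.
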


\begin{proof} We will mimic the proof of~\cite[Theorem 3.3]{P89}. First assume that $X$ is strongly AUS with respect to $\E$ and fix $t>0$ such that $\sp{\E}(t)<t/2$. Thus, there exists an increasing sequence $(m_n)_n\subset \mathbb N$ so that $m_1=0$ and $\sp{\E}(t,m_n)<t/2$ for $n> 1$. Consider $F_n = \bigoplus_{i=m_n+1}^{m_{n+1}} E_i$ and let $q>1$ be such that $(2-t/2)^q<2$. Take $\nu<1/2$ so that $(1+\alpha-t/2)^q<1+\alpha^q$ whenever $|1-\alpha|<\nu$. Note that for such $\alpha$, if $x\in \bigoplus_{i=1}^n F_i\cap S_X$ and $y=\closure{\bigoplus_{i=n+1}^\infty F_i}\cap S_X$ for some $n$, then
\[
 \norm{x+\alpha y} \leq \norm{x+ty}+(\alpha-t)\norm{y} \leq 1+\alpha-\frac{t}{2} \leq (1+\alpha^q)^{1/q}\,.
\]
Now one can follow the same steps as in the proof of Gurarii's theorem (see, e.g.~\cite[Lemma 9.26]{FHHMZ11}) to get the statement. 

On the other hand, assume that $\E$ is monotone and $X$ is strongly AUC with respect to $\E$. We will argue as in~\cite[Lemma 9.27]{FHHMZ11}. By Proposition~\ref{prop:SAUSduality}, $F=\cvspan\{(P_n^\E)^* X^*:n\in \N\}$ is strongly AUS with respect to $\Ed$. From what we have already proved we get $q>1$, $C>0$ and an increasing sequence $(m_n)_n$ so that the FDD $\F=(F_n)_n$ given by $F_n = \bigoplus_{i=m_n+1}^{m_{n+1}} (P_{n+1}^\E-P_n^\E)^* X^*$ is a blocking of $\Ed$ which satisfies $(\infty,q)$-estimates with constant $C$. Now, take $p = \frac{q-1}{q}$ and $\G=(G_n)_n$ given by $G_n = \bigoplus_{i=m_n+1}^{m_{n+1}} E_i$. We will show that $p$ and $\G$ do the work. For that, let $x_1,\ldots, x_n\in X$ with $\ran_{\G}x_i \cap \ran_{\G}x_j=\emptyset$ for all $i\neq j$. For each $i$, take $f_i \in F$ such that $\norm{f_i}=1$ and $f_i(x_i)=\norm{x_i}$, which exists since $\E$ is monotone and so $F$ is a $1$-norming subspace of $X^*$. Moreover, we may replace $f_i$ by $f_i \circ (P_{\max\ran_{\G}(x_i)}^\G- P_{\min\ran_{\G}(x_i)-1}^\G)$ to get that $\ran_{\F}(f_i)=\ran_{\G}(x_i)$ for each $i$. Thus, $f_1,\ldots, f_n$ have pairwise disjoint ranges and $f_i(x_j)=\delta_{ij} \norm{x_i}$ for each $i,j$. Now let $f=\sum_{i=1}^n \beta_i f_i$, where $\beta_i = \norm{x_i}^{1/(q-1)}$. Then
\[
\norm{\sum_{i=1}^n x_i} \geq \frac{1}{\norm{f}} f\left(\sum_{i=1}^n x_i\right) \geq \frac{ \sum_{i=1}^n \beta_i \norm{x_i}} {C\left(\sum_{i=1}^n \norm{\beta_i f_i}^q\right)^{\frac{1}{q}}} = \frac{1}{C}\left(\sum_{i=1}^n \norm{x_i}^p\right)^\frac{1}{p}\,,
\]
as desired. 
\end{proof}

Recall that an FDD $\E=(E_n)_n$ is said to be \emph{unconditional} if there exists a constant $L>0$ so that for every $n$ and every $A\subset\{1,\ldots,n\}$ we have $||\sum_{i\in A}x_i||\leq L||\sum_{i=1}^n x_i||$ whenever $x_i\in E_i$ for each $i=1,\dots,n$. 

It is well-known that every FDD satisfying $(\infty,q)$-estimates for some $q>1$ is shrinking, and every FDD satisfying $(p,1)$-estimates for some $p<\infty$ is boundedly complete. Moreover, an FDD is shrinking (resp.\ boundedly complete) if it has an shrinking (resp.\ boundedly complete) blocking. This yields to the following result.

\begin{proposition}\label{prop:AUSshrinking} Let $\E$ be an FDD for a Banach space $X$.
\begin{itemize}
\item[a)] If $X$ is strongly AUS with respect to $\E$ then $\E$ is shrinking. 
\item[b)] If $\E$ is either unconditional or monotone and $X$ is strongly AUC with respect to $\E$ then $\E$ is boundedly complete.
\end{itemize}
Thus, $X$ is reflexive whenever it is both strongly AUS and strongly AUC with respect to some unconditional FDD.
\end{proposition}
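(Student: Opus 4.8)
The plan is to read off a) and the monotone case of b) directly from Proposition~\ref{prop:AUSestimate} together with the classical facts recalled just above, and to treat the unconditional case of b) by a separate, hands-on argument. For a), if $X$ is strongly AUS with respect to $\E$ then Proposition~\ref{prop:AUSestimate}~a) gives a blocking $\F$ of $\E$ satisfying $(\infty,q)$-estimates for some $q>1$; such a blocking is shrinking, and since shrinkingness passes from a blocking back to the whole FDD, $\E$ is shrinking. When $\E$ is monotone, b) is entirely analogous: Proposition~\ref{prop:AUSestimate}~b) produces a blocking $\F$ of $\E$ with a lower power-type estimate, which is therefore boundedly complete, and bounded completeness likewise passes from $\F$ to $\E$.

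The genuinely new point, and the main obstacle, is the unconditional case of b): Proposition~\ref{prop:AUSestimate}~b) requires a \emph{monotone} FDD, and strong AUC is not obviously preserved by the renorming that would turn an unconditional FDD into a monotone one. I would instead argue directly by contradiction, from the definition of $\sd{\E}$. Assume $\E$ is unconditional with constant $L$ but not boundedly complete, and pick $x_n\in E_n$ with $M:=\sup_N\norm{T_N}<\infty$, where $T_N=\sum_{n\le N}x_n$, while $\sum_n x_n$ diverges. Then $(T_N)_N$ is bounded and not Cauchy, so there is $\delta>0$ such that beyond any index two partial sums differ in norm by at least $\delta$. Fixing a small $t_0>0$ (to be constrained below, which is legitimate since $\sd{\E}(t)>0$ for every $t>0$) and setting $\gamma:=\sd{\E}(t_0)>0$, the set $G=\{m:\sd{\E}(t_0,m)\ge\gamma/2\}$ is infinite, because $\sd{\E}(t_0)=\inf_n\sup_{m\ge n}\sd{\E}(t_0,m)$.

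The key is to place the cuts of a block sequence at the ``good'' indices $G$. Starting from some $m_0\in G$, I would recursively choose blocks $u_i=T_{b_i}-T_{m_{i-1}}$ with $\norm{u_i}\ge\delta/2$ (using the non-Cauchy gap) and then $m_i\in G$ with $m_i\ge b_i$. Writing $w_j=\sum_{i\le j}u_i$, one gets $w_j\in H_{m_j}$ and $u_{j+1}\in H^{m_j}$, complementary across the good cut $m_j$. Hence, with $x=w_j/\norm{w_j}$ and $y=u_{j+1}/\norm{u_{j+1}}$,
\[ \norm{x+t_0y}-1\ \ge\ \sd{\E}(t_0,m_j)\ \ge\ \frac{\gamma}{2}. \]
Because $w_j$ is the restriction of the segment $T_{b_j}-T_{m_0}$ to the indices actually used (discarding the gaps), unconditionality yields $r_j:=\norm{w_j}\le LM$ for all $j$; this is the one place the unconditional hypothesis enters. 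Choosing $t_0$ so small that $s_j:=t_0 r_j/\norm{u_{j+1}}\le1$ (possible since $r_j\le LM$ and $\norm{u_{j+1}}\ge\delta/2$), the convexity identity $w_j+s_ju_{j+1}=(1-s_j)w_j+s_jw_{j+1}$ gives the matching upper bound $\norm{x+t_0y}-1\le s_j(r_{j+1}-r_j)/r_j$. Comparing the two estimates forces
\[ r_{j+1}-r_j\ \ge\ \frac{\gamma\,\norm{u_{j+1}}}{2t_0}\ \ge\ \frac{\gamma\delta}{4t_0}>0, \]
so $r_j\to\infty$, contradicting $r_j\le LM$. Thus $\E$ is boundedly complete.

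Finally, the reflexivity assertion follows at once by combining a) and b): if $X$ is both strongly AUS and strongly AUC with respect to an unconditional FDD, then that FDD is simultaneously shrinking and boundedly complete, and an FDD with both properties spans a reflexive space. I expect the construction of the block sequence with cuts in $G$ — arranging $\norm{u_i}\ge\delta/2$ and keeping all the partial block sums uniformly bounded via unconditionality — to be the delicate part, since it is precisely what lets one convert the $\limsup$-type information carried by $\sd{\E}(t_0)>0$ into a uniform lower bound across infinitely many cuts.
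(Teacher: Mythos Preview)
Your proposal is correct and follows essentially the same strategy as the paper throughout: a) and the monotone case of b) are deduced from Proposition~\ref{prop:AUSestimate} exactly as the paper does, and for the unconditional case of b) both proofs argue by contradiction, extract blocks of norm bounded below sitting across ``good'' cuts where $\sd{\E}(\cdot,m)$ is close to its $\inf$--$\sup$ value, and then show that the partial block sums must grow without bound, contradicting the uniform bound supplied by unconditionality. The only difference is in how the convexity step is packaged: the paper works at the fixed parameter $t=M$ and uses the convexity of $t\mapsto\norm{x/\norm{x}+t\,u/\norm{u}}$ to obtain the lower bound $\norm{\sum_{i\le j}u_{n_i}}\ge\norm{\sum_{i<j}u_{n_i}}+\sd{\E}(M,m_j)$ directly, whereas you introduce an auxiliary small $t_0$ (so that $s_j\in[0,1]$) and use the triangle inequality on $(1-s_j)w_j+s_jw_{j+1}$ to get an upper bound that, combined with the AUC lower bound, forces $r_{j+1}-r_j\ge\gamma\delta/(4t_0)$; this is a cosmetic rearrangement of the same idea.
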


\begin{proof}
Both a) and b) in the monotone case follow from Proposition~\ref{prop:AUSestimate}. Now assume that $\E$ is unconditional with constant $L>0$. Assume that $X$ is strongly AUC with respect to $\E$ and $\E$ is not boundedly complete. Then there exists $0<M<1$, an increasing sequence $(k_n)_n\subset \mathbb N$ and a sequence $(u_n)_n$ such that $u_n\in\bigoplus_{i=k_{n}+1}^{k_{n+1}} E_i$ such that $\norm{u_n}\geq M$ and $\norm{\sum_{i\leq n} u_i}\leq 1$ for each $n$. We may inductively pick increasing sequences $(n_j)_j$ and $(m_j)_j$ satisfying that $k_{n_{j-1}+1}\leq m_j<k_{n_{j}}$ and  $ \sd{\E}(t,m_j) \geq \sd{\E}(t)(1- 2^{-j-1})>0$. 
We claim that 
\begin{equation}\label{claim:bddly}
L \geq \norm{\sum_{i\leq j} u_{n_i}} \geq M + (j-1) \sd{\E}(M)(1-\sum_{i=1}^j 2^{-i-1})
\end{equation}
for each $j\geq 1$. Indeed, for $j=1$ the statement is clear. Moreover, assume that the claim holds for $j-1$ and take $x=\sum_{i\leq j-1} u_{n_i}$. Then the convexity of the function $t\mapsto \norm{\frac{x}{\norm{x}}+t\frac{u_{n_j}}{\norm{u_{n_j}}}}$ and the fact that $x\in \bigoplus_{i=1}^{m_j} E_i$ and $u_{n_j}\in \overline{\bigoplus_{i=m_j+1}^\infty E_i}$ imply 
\allowdisplaybreaks
\begin{align*}
L  &\geq \norm{\sum_{i\leq j} u_{n_i}} = \norm{x}\norm{\frac{x}{\norm{x}}+\frac{u_{n_j}}{\norm{x}}} \\
&\geq \norm{x}\left(1+\frac{\norm{u_{n_j}}}{M\norm{x}}\left(\norm{\frac{x}{\norm{x}}+M\frac{u_{n_j}}{\norm{u_{n_j}}}}-1\right)\right)\\
&\geq \norm{x} + \sd{\E}(M)(1-2^{-j-1})\\
&\geq M + (j-2)\sd{\E}(M)(1-\sum_{i=1}^{j-1} 2^{-i+1}) + \sd{\E}(M)(1-2^{-j-1})\\
&\geq M + (j-1) \sd{\E}(M)(1-\sum_{i=1}^j 2^{-i-1})\,.
\end{align*}
That proves the claim. Finally, from (\ref{claim:bddly}) follows that $L\geq M + 2^{-1}(j-1)\sd{\E}(M)$ for each $j\geq 1$. Thus, $\sd{\E}(M)\leq 0$, which is a contradiction. 
\end{proof}
From the point of view of renorming theory, the strong asymptotic properties introduced above turn out to be equivalent to the classical ones on reflexive spaces admitting a FDD. This follows from Prus' characterisation of NUC norms~\cite{P89}. 

\begin{proposition} Let $\E$ be an FDD for a Banach space $X$. If $X$ is AUC (respectively, AUS), then there is an equivalent norm $\eqnorm$ in $X$ and a blocking $\F$ of $\E$ such that $(X,\eqnorm)$ is strongly AUC (respectively, strongly AUS) with respect to $\F$.  
\end{proposition}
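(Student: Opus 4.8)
The plan is to derive this from Prus' characterisation of NUC norms~\cite{P89} together with the duality of Proposition~\ref{prop:SAUSduality}. As the preceding remark indicates, the natural setting is that of reflexive spaces, so I assume throughout that $X$ is reflexive; then every FDD of $X$ is automatically shrinking and boundedly complete, and so Proposition~\ref{prop:SAUSduality} applies to every blocking. I would establish the AUC case directly and deduce the AUS case by duality.

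Assume first that $X$ is AUC. Being reflexive, $X$ is then NUC, and Prus' characterisation provides an equivalent norm $\norm{\cdot}_0$ on $X$, a blocking $\F$ of $\E$, an exponent $1<p<\infty$ and a constant $C\geq1$ such that
\[ \norm{\textstyle\sum_i z_i}_0\geq C^{-1}\Big(\sum_i\norm{z_i}_0^p\Big)^{1/p} \]
for every finite family of blocks $z_i$ of $\F$ with pairwise disjoint ranges; this is the lower estimate appearing in Proposition~\ref{prop:AUSestimate}. The decisive point is to reduce the constant to $1$, since a lower estimate with $C>1$ only forces the modulus to be positive for large $t$, whereas strong AUC requires positivity for every $t>0$. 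To achieve this I would pass to the equivalent norm
\[ \norm{x}_1=\sup\Big\{\Big(\sum_{k=1}^r\norm{Q_k x}_0^p\Big)^{1/p}: Q_k=P_{n_k}^\F-P_{n_{k-1}}^\F,\ 0=n_0<n_1<\dots<n_r\Big\}. \]
Taking the trivial partition and applying the lower estimate give $\norm{\cdot}_0\leq\norm{\cdot}_1\leq C\norm{\cdot}_0$, so $\norm{\cdot}_1$ is equivalent and renders $\F$ monotone, while concatenating near-optimal partitions on blocks with disjoint ranges shows that $\norm{\cdot}_1$ satisfies the same lower $\ell_p$-estimate with constant $1$. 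Consequently, if $x\in H_m$ and $y\in H^m$ with $\norm{x}_1=\norm{y}_1=1$, then $x$ and $ty$ have disjoint $\F$-ranges, whence $\norm{x+ty}_1^p\geq\norm{x}_1^p+t^p\norm{y}_1^p=1+t^p$. Therefore $\sd{\F}(t)\geq(1+t^p)^{1/p}-1>0$ for every $t>0$, so $(X,\norm{\cdot}_1)$ is strongly AUC with respect to $\F$, which is the required renorming.

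For the AUS case, suppose $X$ is AUS; then $X^*$ is reflexive and AUC, and $\Ed$ is an FDD for $X^*$. Applying the case just proved to $X^*$ yields an equivalent norm $\nu$ on $X^*$ and a blocking $\G$ of $\Ed$ such that $(X^*,\nu)$ is strongly AUC with respect to $\G$. Since $X$ is reflexive, the weak and weak* topologies on $X^*$ coincide, so the unit ball of $\nu$ is weak*-closed; thus $\nu$ is a dual norm, and I take the predual norm $\norm{\cdot}_1$ it induces on $X$. Writing $\G=\F^*$, where $\F$ is the blocking of $\E$ determined by the same index sequence, the FDD $\F$ is monotone and shrinking for $\norm{\cdot}_1$, so Proposition~\ref{prop:SAUSduality} shows that $\sp{\F}^*$ is equivalent to $\sd{\G}$. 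As $\sd{\G}(t)>0$ for all $t>0$, Young duality forces $t^{-1}\sp{\F}(t)\to0$; that is, $(X,\norm{\cdot}_1)$ is strongly AUS with respect to $\F$.

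I expect the reduction of the estimate constant to $1$ to be the main obstacle: a block estimate with a genuine constant controls the asymptotic moduli only away from $0$, and it is precisely the sup-renorming above (and, dually, the corresponding inf-renorming for an upper estimate) that converts such an estimate into a strong modulus near $0$. The remaining bookkeeping---matching a blocking of $\Ed$ with a blocking of $\E$ and recognising the renormed dual as a dual norm---is routine under reflexivity. Reflexivity is, moreover, genuinely needed for the AUS half: for the non-shrinking summing FDD of $c_0$, Proposition~\ref{prop:AUSshrinking} shows that no blocking and no equivalent norm can make $c_0$ strongly AUS, even though $c_0$ is AUS.
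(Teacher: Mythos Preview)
Your proof is correct and follows essentially the same route as the paper: both invoke Prus' theorem to obtain a blocking with lower $\ell_p$-estimates, pass to the sup-over-partitions renorming $|||x|||^p=\sup\sum\norm{Q_kx}^p$ to force the estimate constant to $1$, and then deduce the AUS case by applying the AUC case to $X^*$ together with Proposition~\ref{prop:SAUSduality}. Your explicit handling of the reflexivity hypothesis (needed but unstated in the proposition) and of the dual-norm issue in the AUS step is a welcome clarification, and your closing remark on the summing basis of $c_0$ correctly identifies why reflexivity cannot be dropped.
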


\begin{proof}
First assume that $X$ is AUC. Since $X$ is reflexive, it is also NUC. Theorem 4.2 in~\cite{P89} provides a blocking $\F=(F_n)_n$ of $\E$ which satisfies $(p,1)$-estimates with constant $C>0$ for some $p>1$ (actually, Theorem 4.2 in~\cite{P89} is stated for $\E$ being a basis, but it also works for FDDs). Following~\cite{P89}, given a block $x\in X$ we define
\[ |||x|||^p:= \sup\left\{\sum_{i=1}^n ||x_i||^p : x= \sum_{i=1}^n x_i, \ran_\F x_i \cap \ran_\F x_j = \emptyset \text{ for all } i\neq j\right\}.\]
Then $\eqnorm$ can be extended to a norm in $X$ which satisfies $||x||\leq |||x|||\leq C^{-1}||x||$ for every $x\in X$. 
Moreover, $|||x|||^p+|||y|||^p \leq |||x+y|||^p$ whenever $x\in \bigoplus_{i=1}^n F_i$ and $y\in \closure{\bigoplus_{i=n+1}^\infty F_i}$. Therefore $(X,\eqnorm)$ is strongly AUC with respect to $\F$ with modulus $\sd{\F}(t)\leq (1+t^p)^{1/p}-1$, as desired. 

Finally, assume $X$ is AUS. Then $X^*$ is AUC and so there is a blocking $\F$ of $\Ed$ and an equivalent norm in $X^*$ such that $X^*$ is strongly AUC with respect to $\F$ under this new norm. Now the result follows from the duality between strongly AUC and strongly AUS norms proved in Proposition~\ref{prop:SAUSduality}. 
\end{proof}

We finish the section by providing some examples of spaces having a basis which satisfy the classical asymptotic properties but not the stronger ones. 

\begin{example} 
\begin{enumerate}[leftmargin=*]
\item[a)] Johnson and Schechtman constructed in~\cite{JO01} a subspace $Y$ of $c_0$ with a basis such that $Y^*$ does not have the approximation property. Thus, $Y$ is an AUS space and it does not admit a shrinking FDD. Therefore $Y$ is not a strongly AUS space. 
\item[b)] Girardi proved in~\cite{G01} that $JT_*$, the predual of the James Tree space, is an AUC space. Since $JT_*$ is not isomorphic to a dual space, it does not admit a boundedly complete FDD. Thus $JT_*$ is not strongly AUC space with respect to any either unconditional or monotone FDD. 
\end{enumerate}
\end{example}

Note that the failure of strong asymptotic properties in previous examples relies on the lack of reflexivity. We do not know any example of reflexive Banach space admitting an FDD with is AUC but not strongly AUC.

\section{Asymptotically uniformly smooth injective tensor products}\label{sec:iten}

Let us recall that if $T:X\to X$ and $R: Y\to Y$ are linear operators then 
\[ (T\otimes S)(\sum_{i=1}^n x_i\otimes y_i ) = \sum_{i=1}^n T(x_i)\otimes S(y_i)\]
defines a linear operator from $X\iten Y$ to $X\iten Y$ such that $\norm{T\otimes S} = \norm{T} \cdot \norm{S}$. 

\begin{theorem}\label{th:AUSiten} Let $\E, \F$ be FDDs on Banach spaces $X$ and $Y$, respectively. Then there exists a constant $K>0$ such that 
\[ \p_{X\iten Y}(t) \leq (1+\sp{\E}(Kt))(1+\sp{\F}(Kt))-1 \]
for every $0<t\leq 1/K$.
\end{theorem}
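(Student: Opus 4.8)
The plan is to realise $X\iten Y$ through the tensor decomposition $R_k=P_k^\E\otimes P_k^\F$ and to reduce the two--variable smoothness estimate to two separate one--variable applications of the moduli $\sp{\E}$ and $\sp{\F}$; the crucial device is to cut with \emph{rectangular} projections of independent heights, so that the two moduli decouple and the product on the right-hand side appears.

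Write $K_\E=\sup_n\norm{P_n^\E}$ and $K_\F=\sup_n\norm{P_n^\F}$, and recall from the start of this section that $\norm{T\otimes S}=\norm{T}\,\norm{S}$ on $X\iten Y$, so each $R_k$ is a bounded projection with finite dimensional range and $R_ku\to u$ for every $u$. Since $\p_{X\iten Y}=\p_{X\iten Y}^{(X\iten Y)^*}$ admits every finite codimensional subspace as a competitor in its defining infimum, and since the finite blocks $\bigcup_k\ran R_k\cap S_{X\iten Y}$ are dense, it suffices (by~\cite[Lemma 1]{GJT07}, exactly as in Proposition~\ref{prop:FDDmod}) to bound $\p_{X\iten Y}(t,u)$ for $u\in\ran R_n\cap S_{X\iten Y}$.

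Fix such a $u$ and heights $m_1,m_2\ge n$, and test the infimum defining $\p_{X\iten Y}(t,u)$ against the finite codimensional subspace $W=\ker(P_{m_1}^\E\otimes P_{m_2}^\F)$. Using $I-P_{m_1}^\E\otimes P_{m_2}^\F=(I-P_{m_1}^\E)\otimes I+P_{m_1}^\E\otimes(I-P_{m_2}^\F)$, I would split $w\in W$ as $w=w_1+w_2$ with $w_1=((I-P_{m_1}^\E)\otimes I)w$ and $w_2=(P_{m_1}^\E\otimes(I-P_{m_2}^\F))w$, and set $v=u+w_2$, so that $\norm{u+w}=\norm{v+w_1}$. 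The estimate then runs through the slice formulas $\norm{z}=\sup_{\psi\in B_{Y^*}}\norm{z_\psi}_X=\sup_{\phi\in B_{X^*}}\norm{z^\phi}_Y$, where $z_\psi$ and $z^\phi$ denote the partial evaluations. In the $Y$--direction every slice $u^\phi$ lies in $\bigoplus_{j\le m_2}F_j$ while $(w_2)^\phi$ lies in the tail, so normalising and invoking the definition of $\sp{\F}(\cdot,m_2)$ gives $\norm{v^\phi}_Y\le\norm{u^\phi}_Y(1+\sp{\F}(Kt,m_2))$ and hence $\norm{v}\le 1+\sp{\F}(Kt,m_2)$; in the $X$--direction every slice $v_\psi$ lies in $\bigoplus_{i\le m_1}E_i$ while $(w_1)_\psi$ lies in the tail, so the same argument with $\sp{\E}(\cdot,m_1)$ yields
\[ \norm{u+w}\le(1+\sp{\E}(Kt,m_1))(1+\sp{\F}(Kt,m_2)). \]
Here $K$ is chosen in terms of $K_\E,K_\F$ so that the decomposition factors satisfy $\norm{w_1},\norm{w_2}\le Kt$ and all the ratios that appear are at most $1$ when $t\le 1/K$. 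The one delicate point is that $\norm{u^\phi}$ or $\norm{v_\psi}$ may be small, so that the modulus cannot be applied directly; but in that range the crude bound $\norm{a+b}\le\norm{a}+\norm{b}$ already keeps the slice below $1$, which is harmless since both right-hand factors are $\ge1$.

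Because the heights $m_1$ and $m_2$ are independent, the resulting bound on $\p_{X\iten Y}(t,u)$ factorises:
\[ \p_{X\iten Y}(t,u)\le\inf_{m_1,m_2\ge n}\big[(1+\sp{\E}(Kt,m_1))(1+\sp{\F}(Kt,m_2))-1\big], \]
and since both factors are at least $1$ this infimum equals $(1+\inf_{m_1\ge n}\sp{\E}(Kt,m_1))(1+\inf_{m_2\ge n}\sp{\F}(Kt,m_2))-1$. Bounding each inner infimum by $\sp{\E}(Kt)$ and $\sp{\F}(Kt)$ respectively and taking the supremum over all finite blocks $u$ yields the theorem. The main obstacle, and precisely the reason for the rectangular cut, is this factorisation: with the square projections $R_m$ one would only reach $\liminf_m[(1+\sp{\E}(Kt,m))(1+\sp{\F}(Kt,m))]$, which may strictly exceed the product $(1+\sp{\E}(Kt))(1+\sp{\F}(Kt))$ of the two separate liminfs, so decoupling the two directions is essential.
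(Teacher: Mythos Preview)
Your argument is correct and follows essentially the same route as the paper's proof: reduce to finite blocks via tensor projections, test against the finite-codimensional kernel of a rectangular projection $P_{m_1}^\E\otimes P_{m_2}^\F$, split the perturbation into an $X$-tail and a $Y$-tail, and estimate each piece by slicing with functionals from the other factor and invoking $\sp{\E}(\cdot,m_1)$ and $\sp{\F}(\cdot,m_2)$; the small-slice case is handled by the same crude triangle-inequality bound you describe (the paper uses the threshold $1/(2K)$ and the lower bound $\norm{u+(I\otimes P^\F)v}\ge 1/K$ coming from $(P^\E\otimes I)(u+(I\otimes P^\F)v)=u$).

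The only organisational difference is how the two infima are decoupled. The paper fixes $t$ and $\varepsilon$, pre-selects increasing sequences $(m_n^\E)_n$, $(m_n^\F)_n$ with $\sp{\E}(Kt,m_n^\E)\le\sp{\E}(Kt)+\varepsilon$ and $\sp{\F}(Kt,m_n^\F)\le\sp{\F}(Kt)+\varepsilon$, and then lets the dense set of $u$'s be the ranges of $P_{m_n^\E}^\E\otimes P_{m_n^\F}^\F$; the $\varepsilon$ is removed at the end. You instead keep the natural square blocks $R_n$ for the dense set and optimise over the two heights $m_1,m_2\ge n$ independently afterwards. Both devices achieve the same thing---your closing remark about why a single square height would not suffice is exactly the point---and the resulting constant $K$ is of the same order (the paper obtains $4K^2$ with $K=\sup_n\max\{\norm{P_n^\E},\norm{P_n^\F}\}$). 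Your decomposition $w=(Q^\E\otimes I)w+(P^\E\otimes Q^\F)w$ is the mirror image of the paper's $v=(I\otimes Q^\F)v+(Q^\E\otimes P^\F)v$; either choice of cutting the L-shape works.
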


\begin{proof}
Let $Q_n^E=I-P_n^E$ and $Q_n^\F = I-P_n^\F$ be the complementary projections. Take $K=\sup\{\norm{P_n^\E},\norm{P_n^\F}:n\in \mathbb N\}$.
Fix $0<t\leq \frac{1}{4K^2}$ and $\varepsilon>0$. There exist increasing sequences $(m_n^\E)_n$ and $(m_n^\F)_n$ so that $\sp{\E}(Kt,m_n^\E)\leq\sp{\E}(Kt)+\varepsilon$ and $\sp{\F}(Kt,m_n^\F)\leq \sp{\F}(Kt)+\varepsilon$. 

Note that $P_{m_n^\E}^\E\otimes P_{m_n^\F}^\F$ is a linear projection on $X\iten Y$. Moreover, the set of all $u\in S_{X\iten Y}$ such that  $(P_{m_n^\E}^\E\otimes P_{m_n^\F}^\F)u=u$ for some $n\in \mathbb N$ is dense in $S_{X\iten Y}$. Let $u\in S_{X\iten Y}$ be so that $(P_{m_n^\E}^\E\otimes P_{m_n^\F}^\F)u=u$ for some $n$. Consider the finite codimensional subspace $Z = \ker (P_{m_n^\E}^\E\otimes P_{m_n^\F}^\F)$. We claim that if $v\in Z$ and $||v||=t$ then
\[ \norm{u+v}\leq \norm{u+(I_X\otimes P_n^\F)v}(1+\sp{\F}(4K^2 t, m_n^\F) \]
Indeed, fix $x^*\in S_{X^*}$ and consider $y=(u+(I_X\otimes P_{m_n}^\F)v)(x^*)$. Note that $y \in \bigoplus_{i\leq {m_n^\F}} F_i$. We will distinguish two cases. Assume first that $\norm{y}\geq \frac{1}{2K}$. It follows that
\begin{align*}
\norm{(u+v)(x^*)} &= \norm{(u+(I_X\otimes P_{m_n^\F}^\F)(v))(x^*)+(I_X\otimes Q_{m_n^\F}^\F)(v)(x^*)}\\
				&= \norm{y}\cdot \norm{ \frac{y}{\norm{y}} + \frac{(I_X\otimes Q_{m_n^\F}^\F)(v)(x^*)}{\norm{y}}}  \\
  &\leq \norm{u+(I_X\otimes P_{m_n^\F}^\F)(v)}(1+\sp{\F}(4K^2 t,m_n^\F))\,,
\end{align*}
since $\norm{(I_X\otimes Q_{m_n^\F}^\F)(v)(x^*)}\leq 2Kt$. Now assume that $\norm{y}\leq \frac{1}{2K}$. Then 
\begin{align*}
\norm{(u+v)(x^*)} &= \norm{y+(I_X\otimes Q_{m_n^\F}^\F)(v)(x^*)}\\
				&\leq \frac{1}{2K} + 2K t \leq \frac{1}{K} \leq \norm{u+(I_X\otimes P_{m_n^\F}^\F)(v)}(1+\sp{\F}(4K^2 t,m_n^\F))
\end{align*}
since $t\leq \frac{1}{4K^2}$ and $\norm{u+(I_X\otimes P_{m_n^\F}^\F)v}\geq \frac{1}{K}$. The claim follows by taking supremum with $x^*\in S_{X^*}$. Now, take $y\in S_{Y^*}$ and consider $x=(u+(P_{m_n^\E}^\E\otimes P_{m_n^\F}^\F)v)(y^*)$. Apply the same argument and the fact that $(P_{m_n^\E}^\E\otimes P_{m_n^\F}^\F)v= 0$ to get that  
\begin{align*}
\norm{(u+(I_X\otimes P_{m_n^\F}^\F)v)(y^*)} &\leq \norm{u+(P_{m_n^\E}^\E\otimes P_{m_n^\F}^\F)v}(1+\sp{\E}(4K^2 t,m_n^\E))\\
&= (1+\sp{\E}(4K^2t, m_n^\E))\,,
\end{align*}
as desired. Thus,
\[ \norm{u+v}\leq (1+\sp{\E}(4K^2t)+\varepsilon)(1+\sp{\F}(4K^2t)+\varepsilon)\]
for every $\varepsilon>0$. Taking supremum with $v\in Z$, $||v||=t$ we get 
\[ \p_{X\iten Y}(t,u) \leq (1+\sp{\E}(4K^2t)+\varepsilon)(1+\sp{\F}(4K^2t)+\varepsilon)-1\,.\]
Finally, note that the above inequality holds for all $u$ in a dense subset of $S_{X\iten Y}$ and for every $\varepsilon>0$, so we are done. 
\end{proof}

From the above theorem we get a number of corollaries. 

\begin{corollary} \label{cor:AUSiten} Let $X$, $Y$ be strongly AUS spaces. Then $X\iten Y$ is AUS. If moreover $X$ and $Y$ are strongly AUS with power type $p$ and $q$, respectively, then $X\iten Y$ is AUS with power type $\min\{p,q\}$.
\end{corollary}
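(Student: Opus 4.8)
The plan is to read off the corollary from Theorem~\ref{th:AUSiten}. Since $X$ and $Y$ are strongly AUS, I first fix FDDs $\E$ for $X$ and $\F$ for $Y$ witnessing this, so that $\lim_{t\to 0}t^{-1}\sp{\E}(t)=0$ and $\lim_{t\to 0}t^{-1}\sp{\F}(t)=0$. Applying Theorem~\ref{th:AUSiten} to $\E$ and $\F$ produces a constant $K>0$ for which
\[ \p_{X\iten Y}(t) \leq (1+\sp{\E}(Kt))(1+\sp{\F}(Kt))-1 = \sp{\E}(Kt)+\sp{\F}(Kt)+\sp{\E}(Kt)\sp{\F}(Kt) \]
whenever $0<t\leq 1/K$, and the whole argument reduces to controlling the right-hand side.

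To prove that $X\iten Y$ is AUS, I would divide by $t$ and let $t\to 0$. After the change of variable $s=Kt$, the two linear terms satisfy $t^{-1}\sp{\E}(Kt)=K\,s^{-1}\sp{\E}(s)\to 0$ and likewise for $\F$, by the defining property of strong AUS. For the product term I would factor it as $(t^{-1}\sp{\E}(Kt))\,\sp{\F}(Kt)$: the first factor converges to $0$, while the second is bounded by $Kt\to 0$ using the elementary estimate $\sp{\F}(s)\leq s$ recorded after the definition of the strong moduli. Hence $t^{-1}\p_{X\iten Y}(t)\to 0$.

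For the power-type statement I would insert $\sp{\E}(s)\leq c_1 s^p$ and $\sp{\F}(s)\leq c_2 s^q$ into the displayed bound, so that on $(0,1/K]$ the three summands are dominated by constant multiples of $t^p$, $t^q$ and $t^{p+q}$; since $p,q\geq 1$ and $t\leq 1$ here, each exponent is at least $\min\{p,q\}$ and the sum is $\preceq t^{\min\{p,q\}}$. The only point needing a little care --- and the closest thing to an obstacle --- is that Theorem~\ref{th:AUSiten} only delivers an estimate for $t\leq 1/K$, whereas the relation $\preceq$ is tested on all of $(0,1]$; I would close this gap by using $\p_{X\iten Y}(t)\leq t\leq 1\leq K^{\min\{p,q\}}t^{\min\{p,q\}}$ on the remaining range $(1/K,1]$. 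Combining the two ranges gives $\p_{X\iten Y}\preceq t^{\min\{p,q\}}$, completing the proof; beyond this bookkeeping the corollary carries no genuine difficulty.
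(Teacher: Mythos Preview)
Your proof is correct and follows exactly the route intended by the paper: the corollary is stated there without proof as an immediate consequence of Theorem~\ref{th:AUSiten}, and you have simply spelled out the straightforward asymptotics and the bookkeeping on the interval $(1/K,1]$. There is nothing to add.
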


\begin{proof}[Proof of Theorem~\ref{th:usitenaus}] It follows from Proposition~\ref{prop:UCimplySAUC} and Corollary~\ref{cor:AUSiten}.
\end{proof}

\begin{remark} The injective tensor product of strongly AUC spaces need not to be AUC. Indeed, $\ell_2\iten \ell_2$ contains a subspace isometric to $c_0$, namely $\cvspan\{e_n\otimes e_n: n\in \mathbb N\}$ where $(e_n)_n$ denotes the standard basis of $\ell_2$, and so it is not AUC. On the other hand, it is proved in~\cite{DKLRZ13} that $\ell_p\iten \ell_q$ is AUC whenever $p,q<2$. We do not know if a similar statement holds for a more general class of Banach spaces. 
\end{remark}

Recall that a Banach space admits an equivalent AUS norm if and only if its Szlenk index, $\Sz(X)$, is less or equal than $\omega_0$ (see~\cite{KOS99,R13}). By a result of Schlumprecht~\cite{S15}, every Banach space with separable dual embeds into a Banach space with a shrinking basis and the same Szlenk index. Together with the separable determination of the Szlenk index, this provides another proof of the following particular case of Theorem 1.3 in~\cite{C17}: $X\iten Y$ admits an equivalent AUS norm whenever $X$ and $Y$ admit equivalent AUS norms.

\begin{corollary}\label{cor:kAUS} Let $X, Y$ be Banach spaces such that $X^*$ and $Y$ are strongly AUS. Then $\KK(X,Y)$ is AUS. If moreover $X^*$ is strongly AUS with power type $p$ and $Y$ is strongly AUS with power type $q$ then $\KK(X,Y)$ is AUS with power type $\min\{p,q\}$.
\end{corollary}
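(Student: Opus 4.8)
The plan is to reduce the statement to Corollary~\ref{cor:AUSiten} by passing through the isometric identification of $\KK(X,Y)$ with an injective tensor product. The key observation is that strong AUS is, by definition, strong AUS with respect to \emph{some} FDD; hence the hypothesis that $Y$ is strongly AUS already supplies an FDD on $Y$. This is exactly the condition recalled in the introduction under which $X^*\iten Y$ is isometric to $\KK(X,Y)$, so I would first record this isometry.

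Once the identification $\KK(X,Y)\cong X^*\iten Y$ is in place, the remaining content is immediate. Both $X^*$ and $Y$ are strongly AUS by hypothesis, so Corollary~\ref{cor:AUSiten} applies directly to the pair $(X^*,Y)$ and shows that $X^*\iten Y$ is AUS. Since being AUS is a property of the norm and is preserved under isometry, $\KK(X,Y)$ is AUS as well. For the quantitative statement, I would invoke the power-type half of Corollary~\ref{cor:AUSiten}: if $X^*$ is strongly AUS with power type $p$ and $Y$ with power type $q$, then $X^*\iten Y$, and hence $\KK(X,Y)$, is AUS with power type $\min\{p,q\}$.

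I do not anticipate a genuine obstacle here, since the substantive work was already carried out in Theorem~\ref{th:AUSiten} and Corollary~\ref{cor:AUSiten}. The only point requiring a moment's care is checking that the hypotheses of the tensor-product isometry are met, and this is automatic because strong AUS forces the existence of an FDD on $Y$ (and in fact, by Proposition~\ref{prop:AUSshrinking}, a shrinking one). If one wished to avoid even quoting the isometry as a black box, the alternative would be to verify directly that the natural map $X^*\otimes Y\to\KK(X,Y)$ extends to an isometry, using that the FDD on $Y$ endows $Y$ with the approximation property; but since the introduction already states this identification, quoting it is the cleaner route.
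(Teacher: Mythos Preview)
Your proposal is correct and matches the paper's proof essentially verbatim: the paper simply notes that since $Y$ has an FDD (which follows from strong AUS), $X^*\iten Y$ is isometric to $\KK(X,Y)$, and then applies Corollary~\ref{cor:AUSiten}. Your additional remarks about Proposition~\ref{prop:AUSshrinking} and the approximation property are accurate but not needed for the argument.
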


\begin{proof} Note that if $Y$ has an FDD then $X^*\iten Y$ is isometric to $\KK(X, Y)$ and apply Corollary~\ref{cor:AUSiten}.
\end{proof}

\begin{proof}[Proof of Theorem~\ref{th:uskaus}] It follows readily from Proposition~\ref{prop:UCimplySAUC}, Corollary~\ref{cor:kAUS} and the duality between uniform convexity and uniform smoothness.
\end{proof}

\begin{corollary}\label{cor:weak*AUC} Let $X, Y$ be strongly AUS spaces. Then $\NN(X,Y^*)$ is weak* AUC. If moreover $X$ and $Y$ are strongly AUS with power type $p$ and $q$, respectively, then $\NN(X,Y^*)$ is weak* AUC with power type $\max\{p',q'\}$.
\end{corollary}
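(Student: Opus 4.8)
The plan is to realise $\NN(X,Y^*)$ as the dual of $X\iten Y$ and then transfer the asymptotic smoothness of $X\iten Y$, already obtained in Corollary~\ref{cor:AUSiten}, into weak* asymptotic convexity of the dual by means of the duality of~\cite{DKLR16}.

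First I would secure the isometry $(X\iten Y)^*=\NN(X,Y^*)$, which by Grothendieck's theorem (recalled in the introduction) requires $Y^*$ to have the Radon--Nikod\'ym property. Since $Y$ is strongly AUS, Proposition~\ref{prop:AUSshrinking} shows that the witnessing FDD is shrinking, whence $Y^*$ carries the dual FDD $\Ed$ and is in particular separable. Being a separable dual space, $Y^*$ has the RNP, so Grothendieck's identification applies and $\NN(X,Y^*)=(X\iten Y)^*$ isometrically.

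Next, Corollary~\ref{cor:AUSiten} gives that $X\iten Y$ is AUS. By the result of~\cite{DKLR16} that a space is AUS precisely when its dual is weak* AUC, the space $(X\iten Y)^*$ is weak* AUC; combined with the isometry above this proves that $\NN(X,Y^*)$ is weak* AUC. For the power-type refinement, if $X$ and $Y$ are strongly AUS of power types $p$ and $q$, then Corollary~\ref{cor:AUSiten} makes $\p_{X\iten Y}$ of power type $\min\{p,q\}$; Young's duality, as quantitatively recorded in Proposition~\ref{prop:SAUSduality} and Corollary~\ref{cor:SAUSduality}, then renders $\d_{\NN(X,Y^*)}^*$ of power type $(\min\{p,q\})'$. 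Since conjugation is order-reversing, $(\min\{p,q\})'=\max\{p',q'\}$, which is the claimed power type.

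The only genuinely delicate point is the verification that $Y^*$ enjoys the RNP, since this is exactly what upgrades Grothendieck's duality from integral to nuclear operators; once it is in place, everything else is a routine concatenation of Corollary~\ref{cor:AUSiten}, the AUS/weak*-AUC duality of~\cite{DKLR16}, and the Young-type duality relating power types of $\p$ and $\d^*$.
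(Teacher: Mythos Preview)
Your argument is correct and matches the paper's proof almost exactly: both deduce separability (hence RNP) of $Y^*$ from Proposition~\ref{prop:AUSshrinking}, invoke Grothendieck's isometry $(X\iten Y)^*=\NN(X,Y^*)$, and then transfer the AUS property from Corollary~\ref{cor:AUSiten} to weak* AUC on the dual via the duality recalled from~\cite{DKLR16}. One small remark: the Young-type duality needed for the power-type statement is the one between $\p_Z$ and $\d_{Z^*}^*$ mentioned in the introduction (and established in~\cite{DKLR16}), not Proposition~\ref{prop:SAUSduality} or Corollary~\ref{cor:SAUSduality}, which concern the strong moduli $\sp{\E}$ and $\sd{\Ed}$ relative to an FDD rather than the classical asymptotic moduli.
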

\begin{proof} 
Note that $Y^*$ is separable since $Y$ admits a shrinking FDD by Proposition~\ref{prop:AUSshrinking}. By a result of Grothendieck, the spaces $(X\iten Y)^*$ and $\NN(X,Y^*)$ are isometric. Now the result follows from Corollary~\ref{cor:AUSiten} and the duality between AUS and weak* AUC norms.  
\end{proof}

By a result of Van Dulst and Sims~\cite{DS83}, the weak* AUC property for a dual space $X^*$ implies the weak* fixed point property, i.e., that every nonexpansive mapping from a weak*-closed bounded convex subset of $X^*$ into itself has a fixed point. 

\begin{corollary}\label{cor:weak*FPP} Let $X$, $Y$ be Banach spaces with strongly AUS norms. Then $\NN(X,Y^*)$ has the weak* fixed point property.
\end{corollary}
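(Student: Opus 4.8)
The plan is to combine Corollary~\ref{cor:weak*AUC} with the Van Dulst--Sims theorem recalled just before the statement, once $\NN(X,Y^*)$ has been identified as a dual space carrying the correct weak* topology. In other words, the proof is essentially a bookkeeping assembly of results already established, and the only point requiring genuine care is the compatibility of the two weak* structures involved.

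First I would check that $\NN(X,Y^*)$ really is a dual space. Since $Y$ is strongly AUS, Proposition~\ref{prop:AUSshrinking} shows that $Y$ admits a shrinking FDD, whence $Y^*$ is separable and in particular has the Radon--Nikod\'ym property. By the theorem of Grothendieck recalled in Section~\ref{sec:intro}, $(X\iten Y)^*$ is then isometric to $\NN(X,Y^*)$, so $\NN(X,Y^*)$ is the dual of $X\iten Y$. Next I would invoke Corollary~\ref{cor:weak*AUC}, which under the standing hypothesis that $X$ and $Y$ are strongly AUS gives that $\NN(X,Y^*)$ is weak* AUC.

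At this stage the argument reduces to applying the Van Dulst--Sims result: for a dual space, the weak* AUC property implies the weak* fixed point property. Applied to $\NN(X,Y^*)=(X\iten Y)^*$, this yields the conclusion. The step I expect to be the only genuine content is verifying that the weak* topology with respect to which Corollary~\ref{cor:weak*AUC} establishes asymptotic uniform convexity, namely $\sigma(\NN(X,Y^*),X\iten Y)$, coincides with the weak* topology induced by the predual $X\iten Y$ that is relevant to the fixed point property. This is precisely what the Grothendieck isometry above delivers, so the matching is automatic; no estimate beyond those already proved is needed, and there is no essential obstacle once this identification is made explicit.
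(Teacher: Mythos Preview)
Your proposal is correct and follows essentially the same route as the paper: the corollary is stated immediately after the Van Dulst--Sims result precisely because it is meant to be read as the combination of Corollary~\ref{cor:weak*AUC} with that theorem, and the identification $\NN(X,Y^*)=(X\iten Y)^*$ via Grothendieck (together with the separability of $Y^*$ coming from the shrinking FDD) is already contained in the proof of Corollary~\ref{cor:weak*AUC}. Your explicit check that the two weak* structures coincide is a sensible point of care, but it adds no new ingredient beyond what the paper uses.
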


\section{Orlicz and Lorentz sequence spaces}\label{sec:orlicz}

We recall that an \emph{Orlicz function} $M$ is a continuous nondecreasing
 convex function defined on $\mathbb{ R}^+$ such that $M(0) = 0$ and $\lim_{t\to+\infty} M(t)=+\infty$. An Orlicz function is said to satisfy the $\Delta_2$-condition at zero if
\[ \limsup_{t\to 0} \frac{M(2t)}{M(t)} < +\infty \]
Every Orlicz function $M$ such that $\lim_{t\to+\infty} M(t)/t = +\infty$ has associated another Orlicz function $M^*$, which is its dual Young function, i.e.
\[ M^*(u) = \sup\{uv - M(v): 0 < v < +\infty\}\,.\]
To any Orlicz function $M$ we associate the space $h_M$ of all sequences of
scalars $(x_n)_n$ such that $\sum_{n=1}^\infty M(|x_n|/\rho)<+\infty$ for all $\rho>0$. 
The space $h_M$ endowed with the \emph{Luxemburg norm},
\[ \norm{x}_M = \inf\{ \rho>0 : \sum_{n=1}^\infty M(|x_n|/\rho)\leq 1\} \]
is a Banach space. A convexity argument (see Lemma 1.2.2 in~\cite{BM10}) yields that $\sum_{n=1}^\infty M(|x_n|)\leq \norm{x}$ if $\norm{x}\leq 1$, and $\sum_{n=1}^\infty M(|x_n|)\geq 1$ if $\norm{x}\geq 1$, for every $x\in h_M$. 

The \emph{Boyd indices} of an Orlicz function $M$ are defined as follows:
\begin{align*}
\alpha_M &= \sup\{q : \sup_{0<u,v\leq 1} \frac{M(uv)}{u^qM(v)}<+\infty\}, &
\beta_M &= \inf\{q : \inf_{0<u,v\leq 1} \frac{M(uv)}{u^qM(v)}>0\}
\end{align*}
It is easy to check that $1\leq \alpha_M\leq \beta_M\leq +\infty$, and $\beta_M<+\infty$ if and only if $M$ satisfy the $\Delta_2$ condition at zero. Moreover, the space $\ell_p$, or $c_0$ if $p=\infty$, is isomorphic to a subspace of an Orlicz space $h_M$ if and only if $p\in [\alpha_M, \beta_M]$. 

It was shown in~\cite{GJT07} that the space $h_M$ is AUS if $\alpha_M>1$. Moreover, $\alpha_M$ is the supremum of the numbers $\alpha>0$ such that the modulus of asymptotic smoothness of $h_M$ is of power type $\alpha$. In addition, Borel-Mathurin proved in~\cite{BM10} that if $\beta_M<+\infty$ then $h_M$ is AUC, and $\beta_M$ is the infimum of the numbers $\beta>0$ such that its modulus of asymptotic convexity is of power type $\beta$. A similar result was proved by Delpech in~\cite{Delpech09}. Moreover, their proofs actually show that $h_M$ is strongly AUC (resp. strongly AUS) whenever it is AUC (resp. AUS). 

\begin{proposition} \label{prop:SAUSorlicz} Let $M$ be an Orlicz function. If $\alpha_M>1$ then $h_M$ is strongly AUS with respect to the standard basis $\E=(e_n)_n$. Moreover, $\alpha_M$ is the supremum of the numbers $\alpha>0$ such that $\sp{\E}$ is of power type $\alpha$.
\end{proposition}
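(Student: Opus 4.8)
The plan is to establish the upper estimate for $\sp{\E}$ directly from the Boyd index inequality, and to obtain the matching lower estimate in the ``moreover'' part from the already known behaviour of $\p_{h_M}$. First I would record two reductions. Since replacing $M$ by $M(c\,\cdot)$ only rescales the norm of $h_M$ and affects neither the Boyd indices nor the modulus $\sp{\E}$ (which is unchanged under multiplying the norm by a positive constant, as it is built from the scale-invariant differences $\norm{x+ty}-1$ of norm-one vectors), I may assume $M(1)=1$. Then for any $x\in S_{h_M}$ the cited convexity lemma gives $\sum_n M(|x_n|)\leq \norm{x}=1$, whence $M(|x_n|)\leq 1=M(1)$ and, as $M$ is strictly increasing at $1$, $|x_n|\leq 1$ for every $n$. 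The decisive structural point is that for $x\in H_m\cap S_X$ and $y\in H^m\cap S_X$ the vectors $x$ and $y$ have disjoint supports, so that $\norm{x+ty}_M=\inf\{\rho>0:\sum_n M(|x_n|/\rho)+\sum_n M(t|y_n|/\rho)\leq 1\}$.

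For the core estimate, fix $q$ with $1<q<\alpha_M$ and a constant $C>0$ such that $M(uv)\leq C\,u^q M(v)$ for all $0<u,v\leq 1$, which is exactly the content of $q<\alpha_M$. Given $x,y$ as above with $0<t\leq 1$, I would test the value $\rho=1+Ct^q$. By convexity (as $1/\rho\leq 1$) the first sum is bounded by $\tfrac1\rho\sum_n M(|x_n|)\leq \tfrac1\rho$, while the Boyd inequality applied with $u=t/\rho\leq 1$ and $v=|y_n|\leq 1$ bounds the second sum by $C(t/\rho)^q\sum_n M(|y_n|)\leq Ct^q\rho^{-q}$. Multiplying the target inequality $\tfrac1\rho+Ct^q\rho^{-q}\leq 1$ by $\rho^q$ reduces it to $\rho^{q-1}(\rho-1)\geq Ct^q$, which holds for $\rho=1+Ct^q$ since $(1+Ct^q)^{q-1}\geq 1$. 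Hence $\norm{x+ty}\leq 1+Ct^q$, and as the bound is uniform in $m$ and in the chosen vectors it yields $\sp{\E}(t)\leq Ct^q$ for every $0<t\leq 1$. Taking any $q>1$ gives $\lim_{t\to0}t^{-1}\sp{\E}(t)=0$, so $h_M$ is strongly AUS with respect to $\E$, and letting $q$ range over $(1,\alpha_M)$ shows that $\sp{\E}$ is of power type $q$ for every $q<\alpha_M$.

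For the power-type characterisation it then remains to rule out any power type $\alpha>\alpha_M$. Here I would invoke Corollary~\ref{cor:SAUSimplyAUS}: the standard basis of $h_M$ is monotone (being $1$-unconditional) and, since $\alpha_M>1$ is equivalent to $M^*\in\Delta_2$, it is shrinking; therefore $\p_{h_M}(t)\leq \sp{\E}(t)$. Consequently, if $\sp{\E}\preceq t^\alpha$ then $\p_{h_M}\preceq t^\alpha$, and the result of~\cite{GJT07} that $\alpha_M$ is the supremum of the powers $\alpha$ with $\p_{h_M}\preceq t^\alpha$ forces $\alpha\leq\alpha_M$. Combined with the previous paragraph this gives that $\alpha_M$ is exactly the supremum of the $\alpha$ for which $\sp{\E}$ is of power type $\alpha$. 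The delicate point in the whole argument is the normalisation confining every coordinate of a norm-one vector to $[0,1]$, which is what makes the Boyd inequality $M(uv)\leq Cu^qM(v)$ (valid only for $u,v\leq 1$) applicable; as an alternative to citing~\cite{GJT07}, the lower bound could be produced directly by choosing, for $q>\alpha_M$, pairs $u_k,v_k\in(0,1]$ with $M(u_kv_k)/(u_k^qM(v_k))\to\infty$ and assembling disjointly supported norm-one vectors that witness $\sp{\E}(u_k)/u_k^q\to\infty$.
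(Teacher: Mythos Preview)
Your proof is correct and follows essentially the same route as the paper. The core step is identical: for $q<\alpha_M$ use the Boyd inequality $M(uv)\leq Cu^qM(v)$ on the disjointly supported pair $x\in H_m$, $y\in H^m$ (after normalising $M(1)=1$ so that all coordinates lie in $[0,1]$) to obtain $\sp{\E}(t)\leq Ct^q$. The only technical difference is in how the bound on $\norm{x+ty}$ is extracted: the paper uses the convexity inequality $\norm{z}\leq \sum_n M(|z_n|)$ valid when $\norm{z}\geq 1$ (which follows from $M(v/\rho)\leq \rho^{-1}M(v)$ for $\rho\geq 1$) to write directly
\[
\norm{x+ty}\leq \sum_i M(|x_i|)+\sum_i M(t|y_i|)\leq 1+Ct^q,
\]
whereas you test $\rho=1+Ct^q$ in the Luxemburg definition and verify $\rho^{q-1}(\rho-1)\geq Ct^q$. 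Both computations are standard and equivalent in spirit.

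One point worth noting: the paper's written proof does not actually justify the ``moreover'' clause (that no $\alpha>\alpha_M$ works), while you do, via $\p_{h_M}\leq\sp{\E}$ and the result of \cite{GJT07}. Your invocation of Corollary~\ref{cor:SAUSimplyAUS} requires $\E$ to be shrinking; your argument that $\alpha_M>1\iff M^*\in\Delta_2$ is correct, but you could also simply cite Proposition~\ref{prop:AUSshrinking}(a), since you have just established that $h_M$ is strongly AUS with respect to $\E$.
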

\begin{proof} Let $1<\alpha<\alpha_M$. Then there exists $C>0$ such that $M(uv)\leq C u^\alpha M(v)$ for every $0<u,v<1$. We will show that $\sp{\E}(t,n)\leq C t^\alpha$ for every $n$ and $0<t<1$. For that, let $x\in \vspan\{e_1,\ldots, e_n\}$ and $y\in\cvspan\{e_{i}: i>n\}$ with $\norm{x}=\norm{y}=1$. Note that $\norm{x+ty}\geq 1$ since $\E$ is monotone. Moreover, we may assume that $M(1)=1$ and thus $|y_n|\leq \norm{y}$ for each $n$. Therefore,
\begin{align*} \norm{x+ty}&\leq \sum_{i=1}^\infty M(|x_i+t y_i|) = \sum_{i=1}^n M(|x_i|) + \sum_{i=n+1}^\infty M(t|y_i|) \\
&\leq 1 + Ct^\alpha \sum_{i=n+1}^\infty M(|y_i|) = 1 + Ct^\alpha\,,  
\end{align*}  
as desired. 
\end{proof}

\begin{proposition} \label{prop:SAUCorlicz} Let $M$ be an Orlicz function. If $\beta_M<+\infty$ then $h_M$ is strongly AUC with respect to the standard basis $\E=(e_n)_n$. Moreover, $\beta_M$ is the infimum of the numbers $\beta>0$ such that $\sd{\E}$ is of power type $\beta$.
\end{proposition}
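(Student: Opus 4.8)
The plan is to prove both assertions by a direct computation with the Luxemburg norm, in the spirit of the proof of Proposition~\ref{prop:SAUSorlicz}, exploiting that $\beta_M<+\infty$ is equivalent to the $\Delta_2$-condition at zero. Throughout I normalise $M(1)=1$, so that every unit vector $x$ satisfies $\sum_i M(|x_i|)=1$ and $|x_i|\le 1$ for all $i$, by the convexity lemma quoted above. In particular, writing $\rho=\norm{x+ty}$ and applying that lemma to the unit vector $(x+ty)/\rho$ gives the exact identity
\[ \sum_{i\le m} M\Big(\frac{|x_i|}{\rho}\Big)+\sum_{i>m} M\Big(\frac{t|y_i|}{\rho}\Big)=1 \]
for $x\in H_m\cap S_X$, $y\in H^m\cap S_X$, since these two blocks have disjoint supports. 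Note also that $1\le\rho\le 1+t$ (monotonicity of $\E$ and the triangle inequality), and that $t\mapsto\norm{x+ty}$ is nondecreasing on $[0,\infty)$, being a convex function that attains its minimum over $[0,\infty)$ at $0$. Hence $\sd{\E}$ is nondecreasing, and for the positivity it suffices to treat $0<t\le 1$.

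For the positivity and the upper power estimate I fix $\beta>\beta_M$. By definition of $\beta_M$ there is $c_1>0$ with $M(uv)\ge c_1 u^\beta M(v)$ for all $0<u,v\le 1$, and since $M$ satisfies $\Delta_2$ the quantity $P:=\sup_{0<a\le 1} aM'(a)/M(a)$ is finite (integrating the right derivative $M'$ over $[a,2a]$ bounds $aM'(a)$ by $M(2a)-M(a)\le CM(a)$). The crucial point, and the main obstacle, is that applying the Boyd lower estimate to \emph{both} sums in the identity would only yield $\rho^\beta\ge c_1(1+t^\beta)$, which is useless for small $t$ because necessarily $c_1\le 1$. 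Instead I treat the two sums asymmetrically. For the first sum I use convexity and $P<\infty$ to get, termwise, $M(|x_i|)-M(|x_i|/\rho)\le P\,M(|x_i|)\,(\rho-1)/\rho$, so that the identity gives
\[ \sum_{i>m} M\Big(\frac{t|y_i|}{\rho}\Big)=1-\sum_{i\le m} M\Big(\frac{|x_i|}{\rho}\Big)\le P(\rho-1); \]
for the second sum I keep the Boyd lower estimate, $\sum_{i>m}M(t|y_i|/\rho)\ge c_1(t/\rho)^\beta$. Combining these and using $\rho\le 2$ yields $\sd{\E}(t,m)=\rho-1\ge (c_1/(P2^\beta))\,t^\beta$ for every $m\in\N$ and every $0<t\le1$. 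Hence $\sd{\E}(t)\ge (c_1/(P2^\beta))\,t^\beta>0$, so $h_M$ is strongly AUC with respect to $\E$ and $\sd{\E}\succeq t^\beta$ for every $\beta>\beta_M$.

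It remains to show $\sd{\E}\not\succeq t^\beta$ whenever $\beta<\beta_M$, which pins the infimum at $\beta_M$. For such $\beta$ we have $\inf_{0<u,v\le 1} M(uv)/(u^\beta M(v))=0$, so there are $u_k,v_k\in(0,1]$ with $M(u_kv_k)/(u_k^\beta M(v_k))\to 0$. For each $k$ and each $m$ I build a test pair: take $x=e_1\in H_m$ and let $y\in H^m$ be supported on $N_k$ coordinates beyond index $m$, each equal to $v_k$, with $N_k$ chosen so that $N_kM(v_k)=1$, i.e.\ $\norm{y}=1$; then $\sum_{i>m}M(u_k|y_i|/\rho)=M(u_kv_k/\rho)/M(v_k)$. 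Feeding $t=u_k$ into the identity and using $M(1/\rho)\le 1/\rho$ gives $(\rho-1)/\rho\le M(u_kv_k/\rho)/M(v_k)\le M(u_kv_k)/M(v_k)$, whence
\[ \sd{\E}(u_k,m)\le \rho-1\le 2\,\frac{M(u_kv_k)}{M(v_k)}=2u_k^\beta\cdot\frac{M(u_kv_k)}{u_k^\beta M(v_k)}=o(u_k^\beta). \]
Since this bound is uniform in $m$, we get $\sd{\E}(u_k)\le 2M(u_kv_k)/M(v_k)$, so $\sd{\E}(u_k)/u_k^\beta\to 0$ and no inequality $\sd{\E}(t)\ge c\,t^\beta$ can hold on $(0,1]$. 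The only routine technicalities are rounding $1/M(v_k)$ to an integer $N_k$ (adjusting $v_k$ slightly) and the standard handling of the one-sided derivative $M'$; the substantive step is the asymmetric treatment of the two sums described in the second paragraph.
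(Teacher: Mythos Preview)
Your proof is correct and follows essentially the same route that the paper defers to Borel-Mathurin for: split the identity $\sum_i M(|(x+ty)_i|/\rho)=1$ along the disjoint supports, control the $x$-block via the $\Delta_2$-condition (your constant $P$) and the $y$-block via the Boyd lower estimate, then combine; the sharpness half you supply (test vectors $e_1$ and a flat block of height $v_k$) is likewise the standard argument. The paper's proof simply cites Lemma~1.3.10 of \cite{BM10} rather than writing these details out, so your version is a faithful expansion of what the paper intends.
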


\begin{proof} Let $\beta>\beta_M$. Then there exists $C>0$ such that $M(uv)\geq C u^\beta M(v)$ for every $0<u,v<1$. Now use the monotony of $\E$ and mimic the proof of Lemma 1.3.10 in~\cite{BM10} to get that $\sd{\E}(t,n)\geq Ct^\beta$ for each $n\in \N$ and $0<t<1$. 
\end{proof}

\begin{proof}[Proof of Theorem~\ref{th:kAUSorlicz}]
First, note that $\KK(h_M, h_N)$ contain subspaces isometric to $h_M^*$ and $h_N$. Thus, if either $\alpha_M=1$, $\alpha_N=1$ or $\beta_M=+\infty$ then $\KK(h_M, h_N)$ contains a quotient isomorphic to $\ell_1$ or $\ell_\infty$ and therefore it is not even AUS renormable. Now assume that $\alpha_M>1, \beta_M<\infty$ and $\alpha_N>1$. First, by Proposition~\ref{prop:SAUCorlicz}, $h_M$ is strongly AUC with respect to the standard basis $\E=(e_n)_n$ with power type $\beta$ for each $\beta>\beta_M$. Moreover, since $M$ satisfy the $\Delta_2$ condition at $0$ we have that $\E$ is an unconditional basis of $h_M$. Note that $\ell_1$ is not isomorphic to a subspace of $h_M$ and thus, by a theorem of James, $\E$ is a shrinking basis of $h_M$ that it is also monotone. Thus we can apply Proposition~\ref{prop:SAUSduality} to get that $h_M^*$ is strongly AUS with power type $\beta$ for each $\beta<\beta_M'$. Finally, Proposition~\ref{prop:SAUSorlicz} implies that $h_N$ is strongly AUS with power type $\alpha$ for each $\alpha<\alpha_N$. Now it is enough to apply Corollary~\ref{cor:kAUS}. 
\end{proof}

Lennard proved in~\cite{L90} that the trace class operators $\NN(\ell_2,\ell_2)$ has the weak* fixed point property. This result was extended by Besbes~\cite{B92} to $\NN(\ell_p,\ell_q)$ with $p^{-1}+q^{-1}=1$. Moreover, it is shown in~\cite{DKLRZ13} that the same is true for $1<p,q<\infty$. 

\begin{corollary} Let $M,N$ be Orlicz functions such that $\alpha_M,\alpha_N>1$ and $ \beta_N<\infty$. Then the space $\NN(h_M,h_N)$ has the weak* fixed point property.
\end{corollary}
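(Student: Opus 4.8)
The plan is to deduce this corollary directly from the earlier machinery, specifically Corollary~\ref{cor:weak*FPP}, which already asserts that $\NN(X,Y^*)$ has the weak* fixed point property whenever $X$ and $Y$ carry strongly AUS norms. So the whole task reduces to exhibiting Orlicz spaces playing the roles of $X$ and $Y^*$, and to verifying the strong AUS hypotheses from the assumptions $\alpha_M,\alpha_N>1$ and $\beta_N<+\infty$.

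First I would identify the correct pairing. We want $\NN(h_M,h_N)$, so in the notation of Corollary~\ref{cor:weak*FPP} we seek $X$ with $X=h_M$ and a space $Y$ with $Y^*=h_N$. The natural candidate for $Y$ is $h_{N^*}$, since under the $\Delta_2$-type hypotheses the dual of the Orlicz space $h_{N^*}$ is $h_N$. Concretely, I would argue: because $\beta_N<+\infty$, the function $N$ satisfies the $\Delta_2$-condition at zero, so the standard basis of $h_N$ is unconditional; because $\alpha_N>1$, the space $\ell_1$ does not embed into $h_N$, so by James' theorem the basis is shrinking and hence $h_N$ is a separable dual space, namely $h_N=(h_{N^*})^*$. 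The hypothesis $\beta_N<+\infty$ also transfers through duality of Boyd indices to give $\alpha_{N^*}=\beta_N'>1$, which is precisely the condition needed for $h_{N^*}$ to be strongly AUS.

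Next I would verify the two strong AUS inputs. For $X=h_M$, Proposition~\ref{prop:SAUSorlicz} gives that $h_M$ is strongly AUS with respect to its standard basis exactly because $\alpha_M>1$. For $Y=h_{N^*}$, the same Proposition~\ref{prop:SAUSorlicz} applies once we know $\alpha_{N^*}>1$; as noted, $\alpha_{N^*}=\beta_N'$, and $\beta_N<+\infty$ forces $\beta_N'>1$. With both $h_M$ and $h_{N^*}$ strongly AUS, Corollary~\ref{cor:weak*FPP} applied to $X=h_M$ and $Y=h_{N^*}$ yields that $\NN(h_M,(h_{N^*})^*)=\NN(h_M,h_N)$ has the weak* fixed point property, which is the claim.

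The main obstacle I anticipate is purely the duality bookkeeping for Orlicz spaces: establishing cleanly that $\beta_N<+\infty$ plus $\alpha_N>1$ gives $h_N=(h_{N^*})^*$ isometrically, and that the Boyd index relation $\alpha_{N^*}=\beta_N'$ holds. These facts are standard in the Orlicz-space literature and are implicitly used in the proof of Theorem~\ref{th:kAUSorlicz} just above, so I would simply cite the relevant duality of Boyd indices and the identification of the dual space rather than reprove them. Once that identification is granted, the corollary is an immediate specialisation of Corollary~\ref{cor:weak*FPP}, so no substantive new estimate is required.
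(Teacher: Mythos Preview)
Your approach is correct in outline and lands on the same target, Corollary~\ref{cor:weak*FPP}, but you route through external Orlicz duality where the paper stays internal. The paper takes $Y=h_N^*$ abstractly: reflexivity of $h_N$ (from $1<\alpha_N\le\beta_N<\infty$) makes the canonical basis monotone and shrinking; Proposition~\ref{prop:SAUCorlicz} gives $h_N$ strongly AUC with respect to that basis; and then the strong AUC/AUS duality of Proposition~\ref{prop:SAUSduality} yields that $h_N^*$ is strongly AUS with respect to the dual FDD. No identification of $h_N^*$ with a concrete Orlicz space is ever made. You instead set $Y=h_{N^*}$ and invoke the Boyd-index relation $\alpha_{N^*}=\beta_N'$ together with Proposition~\ref{prop:SAUSorlicz}. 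This works, but your claim that these Orlicz-duality facts are ``implicitly used in the proof of Theorem~\ref{th:kAUSorlicz}'' is not accurate: that proof follows exactly the same abstract-dual route via Proposition~\ref{prop:SAUSduality} and never identifies $h_M^*$ with $h_{M^*}$ or appeals to the index relation. The paper's choice has the advantage of sidestepping the isometry subtlety you flag (the Luxemburg norm on $h_{N^*}$ pairs with the Orlicz norm, not the Luxemburg norm, on $h_N$), which your approach would have to address carefully to ensure that the weak* topology on $\NN(h_M,h_N)$ is the intended one.
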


\begin{proof} Note that $h_N$ is reflexive since $1<\alpha_N,\beta_N<\infty$. Thus the canonical basis $(e_n)_n$ of $h_N$ is shrinking and monotone. Since $h_N$ is strongly AUC with respect to $(e_n)_n$, we get that $h_N^*$ is strongly AUS. Thus, we can apply Corollary~\ref{cor:weak*FPP}.
\end{proof}

Next result provides a characterisation of Orlicz functions $M,N$ such that the space $\KK(h_M, h_N)$ is NUS. This should be compared with~\cite[Corollary 4.4]{DKLRZ13}.

\begin{corollary}  Let $M,N$ be Orlicz functions. Then the following statements are equivalent:
\begin{itemize}
\item[i)] $1<\alpha_N$, $\beta_N<\alpha_M$ and $\beta_M<\infty$.
\item[ii)] $\KK(h_M,h_N)$ is reflexive. 
\item[ii)] $\KK(h_M,h_N)$ is NUS.
\end{itemize}
\end{corollary}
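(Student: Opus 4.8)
The plan is to prove the cycle (i) $\Rightarrow$ NUS $\Rightarrow$ reflexivity $\Rightarrow$ (i), using throughout the equivalence recalled in Section~\ref{sec:intro} that a space is NUS if and only if it is simultaneously AUS and reflexive; this makes the implication NUS $\Rightarrow$ reflexivity immediate.

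For (i) $\Rightarrow$ NUS, first I would observe that the hypotheses force $\alpha_M>1$: indeed $1<\alpha_N\leq\beta_N<\alpha_M\leq\beta_M<\infty$. Together with $\alpha_N>1$ and $\beta_M<\infty$, Theorem~\ref{th:kAUSorlicz} gives that $\KK(h_M,h_N)$ is AUS, so it remains to prove reflexivity. Since $\alpha_M>1$, $\beta_M<\infty$, $\alpha_N>1$ and $\beta_N<\infty$, neither $\ell_1$ nor $c_0$ embeds into $h_M$ or $h_N$, so both $h_M$ and $h_N$ are reflexive. The condition $\beta_N<\alpha_M$ is precisely a Pitt-type separation of Boyd indices; invoking the Orlicz version of Pitt's theorem it yields $\LL(h_M,h_N)=\KK(h_M,h_N)$, i.e.\ every bounded operator is compact. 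As $h_M$ and $h_N$ are reflexive and all operators are compact, a classical reflexivity criterion for spaces of compact operators shows that $\KK(h_M,h_N)$ is reflexive. Being AUS and reflexive, it is NUS.

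For reflexivity $\Rightarrow$ (i): a reflexive space cannot contain the non-reflexive spaces $\ell_1$ or $c_0$. Since $\KK(h_M,h_N)$ contains isometric copies of $h_M^*$ and $h_N$ (as in the proof of Theorem~\ref{th:kAUSorlicz}), both $h_M^*$ and $h_N$, hence $h_M$ and $h_N$, are reflexive, giving $\alpha_M,\alpha_N>1$ and $\beta_M,\beta_N<\infty$. To obtain the strict inequality $\beta_N<\alpha_M$, I would argue by contradiction: if $\beta_N\geq\alpha_M$, set $p=\alpha_M$ and $q=\beta_N$, so $p\leq q$ with $p\in[\alpha_M,\beta_M]$ and $q\in[\alpha_N,\beta_N]$. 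Then I would choose normalized, disjointly supported block bases $(u_k)_k$ of $h_M$ and $(v_k)_k$ of $h_N$ equivalent to the $\ell_p$- and $\ell_q$-unit vector bases respectively, together with disjointly supported functionals $(u_k^*)_k\subset h_M^*$ satisfying $u_k^*(u_j)=\delta_{kj}$ and $\sup_k\norm{u_k^*}<\infty$ (available since the bases are unconditional). The assignment $\lambda\mapsto\sum_k\lambda_k\,u_k^*\otimes v_k$ then embeds $c_0$ isomorphically into $\KK(h_M,h_N)$: boundedness of each such operator and of the map uses the inclusion $\ell_p\subseteq\ell_q$ coming from $p\leq q$, while compactness holds exactly when $\lambda\in c_0$. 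This copy of $c_0$ contradicts reflexivity, so $\beta_N<\alpha_M$.

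The main obstacle is the reflexivity step in (i) $\Rightarrow$ NUS, where the crucial external input is the Pitt-type compactness theorem for Orlicz sequence spaces under the index separation $\beta_N<\alpha_M$; once $\LL(h_M,h_N)=\KK(h_M,h_N)$ is secured, reflexivity of the two factors closes the argument. The secondary technical point is the clean construction of the $c_0$-copy via diagonal block operators, where one must control the norming functionals and exploit $p\leq q$; this is the Orlicz analogue of the elementary fact that $\KK(\ell_p,\ell_q)$ contains an isometric copy of $c_0$ whenever $p\leq q$.
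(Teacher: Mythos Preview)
Your argument is correct, but it takes a different route from the paper. The paper's proof is essentially three lines: it cites \cite{AO97} for the equivalence (i)\,$\Leftrightarrow$\,(ii), observes NUS\,$\Rightarrow$\,reflexive for (iii)\,$\Rightarrow$\,(ii), and then combines (i) (giving AUS via Theorem~\ref{th:kAUSorlicz}) with (ii) (giving reflexivity) to conclude NUS. In contrast, you do not cite \cite{AO97} but essentially reprove its content: the forward direction via an Orlicz Pitt-type theorem together with the Kalton/Ruess/Feder--Saphar criterion that $\KK(X,Y)$ is reflexive precisely when $X,Y$ are reflexive and $\LL(X,Y)=\KK(X,Y)$; the backward direction via an explicit diagonal $c_0$-embedding when the index separation $\beta_N<\alpha_M$ fails.

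What each approach buys: the paper's route is minimal and keeps the operator-theoretic work external. Your route is more self-contained and explains \emph{why} the Boyd-index separation governs reflexivity, at the price of invoking two outside results (the Orlicz Pitt theorem and the reflexivity criterion) and of carrying out the $c_0$-construction. On that last point your sketch is slightly terse: to make the map $\lambda\mapsto\sum_k\lambda_k\,u_k^*\otimes v_k$ bounded from $c_0$ into $\KK(h_M,h_N)$ you should make explicit the bounded projection $h_M\to\overline{\vspan}\{u_k\}$ (available because the $u_k$ are disjointly supported blocks in an unconditional basis), so that $x\mapsto(u_k^*(x))_k$ is bounded into $\ell_p$; once that is said, the factorisation through the formal inclusion $\ell_p\hookrightarrow\ell_q$ (using $p\leq q$) gives both the upper norm bound and compactness for $\lambda\in c_0$, while evaluation at $u_k$ gives the lower bound.
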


\begin{proof}
The equivalence between $i)$ and $ii)$ was shown in~\cite{AO97}. Since each NUS space is reflexive, $iii)$ implies $ii)$. Finally, if $i)$ and $ii)$ holds then $\KK(h_M,h_N)$ is AUS and reflexive and thus it is NUS.
\end{proof}

Finally, we will provide a result on strong asymptotic uniform convexity in Lorentz sequence spaces. Let us recall their definition. Let $1\leq p<\infty$ and let $w$ be a non-increasing sequence of positive numbers such that $w_1=1$, $\lim_n w_n = 0$ and $\sum_{n=1}^\infty w_n=\infty$. The \emph{Lorentz sequence space} $d(w,p)$ is defined as
\[ d(w,p) = \{x=(x_n)_n \in c_0 : \norm{x} = \sup_\sigma \left(\sum_{n=1}^\infty |x_{\sigma(n)}|^p w_n\right)^{1/p} <\infty \}\]
where $\sigma$ ranges over all permutations of the natural numbers. We refer the reader to~\cite{LT77} for more information about these spaces.

\begin{proposition} Let $d(w,p)$, $1<p<\infty$ be a Lorentz sequence space. Let $S_n = \sum_{i=1}^n w_i$. The following conditions are equivalent:
\begin{itemize} 
\item[i)] $d(w,p)$ is uniformly convex.
\item[ii)] $d(w,p)$ is strongly AUC. 
\item[iii)] $d(w,p)$ is AUC. 
\item[iv)] $\inf_n \frac{S_{2n}}{S_n}> 1$. 
\end{itemize}
\end{proposition}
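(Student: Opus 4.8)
The plan is to run the cycle $(\mathrm{i})\Rightarrow(\mathrm{ii})\Rightarrow(\mathrm{iii})\Rightarrow(\mathrm{iv})\Rightarrow(\mathrm{i})$, using the strong-asymptotic machinery of the paper for the three inner implications and the classical theory of Lorentz spaces for the last one. First I would record the structural facts about the canonical basis $\E=(e_n)_n$ of $d(w,p)$: the norm is invariant under permutations and sign changes, so $\E$ is $1$-symmetric, hence monotone; and for $1<p<\infty$ the space $d(w,p)$ is reflexive (see~\cite{LT77}), so $\E$ is a monotone \emph{shrinking} FDD. With this in hand, Proposition~\ref{prop:UCimplySAUC} gives $\delta_{d(w,p)}(t)\le\sd{\E}(t)$ and Corollary~\ref{cor:SAUSimplyAUS} gives $\sd{\E}(t)\le\d_{d(w,p)}(t)$, so that
\[ \delta_{d(w,p)}(t)\ \le\ \sd{\E}(t)\ \le\ \d_{d(w,p)}(t)\qquad(t>0). \]
The left inequality yields $(\mathrm{i})\Rightarrow(\mathrm{ii})$ (uniform convexity forces $\sd{\E}(t)>0$, so $d(w,p)$ is strongly AUC with respect to $\E$), and the right inequality yields $(\mathrm{ii})\Rightarrow(\mathrm{iii})$ (strong AUC with respect to $\E$ forces $\d_{d(w,p)}(t)>0$).

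The core computation is $(\mathrm{iii})\Rightarrow(\mathrm{iv})$, which I would prove by contraposition. Assume (iv) fails; since each ratio $S_{2n}/S_n=1+(w_{n+1}+\dots+w_{2n})/S_n$ exceeds $1$, this is equivalent to $\liminf_n S_{2n}/S_n=1$, so there is a sequence $n_k$ with $S_{2n_k}/S_{n_k}\to1$. I would test the formula for $\d_{d(w,p)}$ from Corollary~\ref{cor:FDDmod} on flat vectors (it suffices to treat $0<t\le1$). Recalling that the Lorentz norm of a finitely supported vector is the decreasing rearrangement weighted by $(w_i)$, given $n$ and $m\ge n$ I set $x=S_n^{-1/p}\sum_{i=1}^n e_i$ and $y=S_n^{-1/p}\sum_{i=m+1}^{m+n}e_i$, so that $\norm{x}=\norm{y}=1$, $x\in P_n^\E(X)$ and $y\in\ker P_m^\E$. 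Since the supports are disjoint and, for $t\le1$, the $n$ entries $S_n^{-1/p}$ dominate the $n$ entries $tS_n^{-1/p}$, the decreasing rearrangement uses the weights $w_1,\dots,w_{2n}$ and
\[ \norm{x+ty}^p = S_n^{-1}\sum_{i=1}^n w_i + t^p S_n^{-1}\sum_{i=n+1}^{2n} w_i = 1 + t^p\Big(\tfrac{S_{2n}}{S_n}-1\Big). \]
As this holds for every $m\ge n$, Corollary~\ref{cor:FDDmod} gives $\d_{d(w,p)}(t)\le \inf_n\big[(1+t^p(S_{2n}/S_n-1))^{1/p}-1\big]$, and evaluating along $n_k$ forces $\d_{d(w,p)}(t)=0$. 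Hence $d(w,p)$ is not AUC, proving $(\mathrm{iii})\Rightarrow(\mathrm{iv})$; by the sandwich it is then also not strongly AUC and not uniformly convex.

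For $(\mathrm{iv})\Rightarrow(\mathrm{i})$ I would invoke the classical characterisation of uniform convexity of Lorentz sequence spaces due to Altshuler (see also~\cite{LT77}): for $1<p<\infty$, the space $d(w,p)$ is uniformly convex precisely when $\inf_n S_{2n}/S_n>1$. This closes the cycle. Alternatively one can argue directly: assuming $\inf_n S_{2n}/S_n=\lambda>1$, a rearrangement estimate for $x+y$ shows that $\delta_{d(w,p)}(t)$ is bounded below by a strictly positive function of $t$ (depending on $\lambda$ and $p$).

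The \emph{main obstacle} is $(\mathrm{iv})\Rightarrow(\mathrm{i})$: the uniform-convexity estimate requires controlling the decreasing rearrangement of $x+y$ for \emph{arbitrary} unit vectors, not merely the flat vectors that gave the easy upper bound on $\d_{d(w,p)}$, and this quantitative step is exactly the content of Altshuler's theorem; the strong-asymptotic tools here only supply the sandwich and the explicit upper estimate. A secondary point to settle is the quantifier hidden in $(\mathrm{ii})$: ``strongly AUC'' allows an arbitrary witnessing FDD. Here one uses that $d(w,p)$ is reflexive, so every monotone FDD $\G$ is shrinking and Corollary~\ref{cor:SAUSimplyAUS} still gives $\sd{\G}(t)\le\d_{d(w,p)}(t)$; thus a failure of AUC forces $\sd{\G}(t)=0$ and strong AUC cannot hold with respect to any such FDD, reconciling the canonical-basis argument with the general definition.
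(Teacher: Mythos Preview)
Your proposal is correct and follows essentially the same route as the paper: Altshuler's theorem for $(\mathrm{iv})\Leftrightarrow(\mathrm{i})$, Proposition~\ref{prop:UCimplySAUC} for $(\mathrm{i})\Rightarrow(\mathrm{ii})$, Corollary~\ref{cor:SAUSimplyAUS} for $(\mathrm{ii})\Rightarrow(\mathrm{iii})$, and a flat-vector test for the contrapositive of $(\mathrm{iii})\Rightarrow(\mathrm{iv})$. The only cosmetic difference is that for the last step the paper invokes the sequential description of $\d_X(t,x)$ (a weakly null sequence of translates of a single flat block) while you feed the same pair $x,y$ directly into the formula of Corollary~\ref{cor:FDDmod}; your computation of $\norm{x+ty}^p=1+t^p(S_{2n}/S_n-1)$ is in fact cleaner than the paper's, and your remark on the existential quantifier in ``strongly AUC'' is a point the paper leaves implicit.
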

\begin{proof}
The equivalence between $i)$ and $iv)$ was shown by Altshuler in~\cite{A75}. Note that the canonical basis $\E=(e_n)_n$ of $d(w,p)$ is monotone. Thus, $i)\Rightarrow ii)$ follows from Proposition~\ref{prop:UCimplySAUC}. Moreover, $d(w,p)$ is reflexive since $p>1$ and so $ii)\Rightarrow iii)$ follows from Corollary~\ref{cor:SAUSimplyAUS}. Finally, assume that $\inf_n \frac{S_{2n}}{S_n} = 1$ and let us show that $d(w,p)$ is not AUC. Fix $\varepsilon>0$ and take $n\in \N$ such that $\frac{S_{2n}}{S_n} < 1+\varepsilon$. Consider the sequence of unitary vectors given by $x_k=\sum_{i=k}^{k+n} \frac{1}{S_n^{1/p}}e_i$. Since $\E$ is a shrinking basis, we have that $(x_k)_k$ is weakly null. In addition,
\[ \norm{x_1+tx_k}^p = \sum_{i=1}^n \frac{1}{S_n} w_i + \sum_{i=n+1}^{2n} \frac{1}{S_n} w_i = \frac{S_{2n}}{S_n} \]
whenever $k>n$. Thus
\[ \overline{\delta}_{d(w,p)}(t,x_1) \leq \liminf_{k\to\infty} \norm{x_1+tx_k} -1\leq \varepsilon, \]
which finishes the proof. 
\end{proof}

\section{Strict convexity}

Finally we study strict convexity of $\KK(X, Y)$ and $X\iten Y$ by using John's ellipsoid theorem. Let us recall that a Banach space $X$ is \emph{strictly convex} if given $x, y \in S_X$, with $x \neq y$ then $\norm{x+y}<2$.

\begin{proof}[Proof of Proposition~\ref{th:KKstrictlyconvex}] First we will show that $\KK(X, Y)$ is not strictly convex. For that, let $Z$ be a $2$-dimensional subspace of $X^*$ and consider $Q$ the canonical projection from $X$ onto $X_0:= X/Z_\bot$. Note that $X_0$ is also $2$-dimensional since $X_0^*$ is isometric to $Z$. Let $Y_0$ be any $2$-dimensional subspace of $Y$ and denote $\iota:Y_0\to Y$ the inclusion operator. Since $Q(B_X)=B_{X_0}$, it follows that $T\mapsto \iota \circ T\circ Q$ defines an isometry from $\KK(X_0, Y_0)$ onto a subspace of $\KK(X, Y)$. Therefore, it suffices to show that $\KK(X_0,Y_0)$ is not strictly convex. For that we will identify the vectors of $X_0$ and $Y_0$ with points in $\mathbb{ R}^2$. Let $D_{X_0}$ be the ellipsoid of maximum volume containing $B_{X_0}$ and $D_{Y_0}$ be the ellipsoid of minimum volume contained in $B_{Y_0}$.  Moreover, fix $x_1\in B_{X_0}\cap \partial D_{X_0}$ and $y_1\in S_{Y_0}\cap D_{Y_0}$. Consider a linear map $T_{X_0}$ which transforms $D_{X_0}$ in $B_{\ell_2^2}$ and maps $x_1$ to $e_1$, and $T_{Y_0}$ which transforms $D_{Y_0}$ in $B_{\ell_2^2}$ and maps $y_1$ to $e_1$. For each $\alpha\in [-1,1]$, let $T_\alpha$ be the linear map given by $T_\alpha(ae_1+be_2)=ae_1+b\alpha e_2$. Finally, consider $R_\alpha = T_{Y_0}^{-1} \circ T_\alpha \circ T_{X_0}$. Then
\[ R_\alpha (B_{X_0}) \subset R_\alpha (D_{X_0}) \subset (T_{Y_0}^{-1} \circ T_\alpha) (B_{\ell_2^2}) \subset T_{Y_0}^{-1}(B_{\ell_2^2}) \subset D_{Y_0} \subset B_{Y_0} \]
so $\norm{R_\alpha}_{\KK(X_0,Y_0)} \leq 1$. Moreover, $R_\alpha (x_1) = (T_{Y_0}^{-1}\circ T_\alpha) (e_1) = T_{Y_0}^{-1}(e_1)= y_1$ and so $R_\alpha$ has norm one for each $\alpha \in [-1,1]$. Clearly $R_{1}\neq R_{-1}$, so this shows that $\KK(X_0,Y_0)$ is not strictly convex.

Finally, let $X_1$ be a $2$-dimensional subspace of $X$. The injective tensor product respects subspaces isometrically and thus $X_0\iten Y$ is isometric to a subspace of $X\iten Y$. Moreover, since $X_1$ is finite-dimensional we have that $X_1\iten Y$ is isometric to $\KK(X_1^*, Y)$ (see, e.g.~\cite[Corollary 4.13]{R02}), which is not strictly convex. This finishes the proof for $X\iten Y$.  
\end{proof}

\begin{remark} In~\cite{DK95} it is used Dvoretzky's theorem in order to show that none of the spaces $\LL(\ell_p,\ell_q)$ or $\LL(c_0,\ell_q)$ are superreflexive, that is, they do not admit an equivalent uniformly convex norm. Indeed, the same argument can be used to prove that neither $\KK(X,Y)$ or $X\iten Y$ are superreflexive whenever $X$ and $Y$ are infinite-dimensional. 
\end{remark}

\textbf{Acknowledgements}: The second-named author thanks Eva Gallardo Guti\'errez for awaring him about the work of Lennard ten years ago.

\bibliographystyle{siam}
\bibliography{aus}

\begin{thebibliography}{10}

\bibitem{A75}
{\sc Z.~Altshuler}, {\em Uniform convexity in {L}orentz sequence spaces},
  Israel J. Math., 20 (1975), pp.~260--274.

\bibitem{AO97}
{\sc E.~A. Ausekle and {\`E}.~F. Oya}, {\em Pitt's theorem for {L}orentz and
  {O}rlicz sequence spaces}, Mat. Zametki, 61 (1997), pp.~18--25.

\bibitem{B92}
{\sc M.~Besbes}, {\em Points fixes dans les espaces des op\'erateurs
  nucl\'eaires}, Bull. Austral. Math. Soc., 46 (1992), pp.~287--294.

\bibitem{BM10}
{\sc L.~Borel-Mathurin}, {\em Isomorphismes non lin\'eaires entre espaces de
  {B}anach}, PhD thesis, Universit\'e Paris 6, 2010.

\bibitem{C17}
{\sc R.~M. Causey}, {\em The {S}zlenk index of injective tensor products and
  convex hulls}, J. Funct. Anal., 272 (2017), pp.~3375--3409.

\bibitem{Delpech09}
{\sc S.~Delpech}, {\em Asymptotic uniform moduli and {K}ottman constant of
  {O}rlicz sequence spaces}, Rev. Mat. Complut., 22 (2009), pp.~455--467.

\bibitem{DK95}
{\sc S.~J. Dilworth and D.~Kutzarova}, {\em Kadec-{K}lee properties for
  {$L(l_p,l_q)$}}, in Function spaces ({E}dwardsville, {IL}, 1994), vol.~172 of
  Lecture Notes in Pure and Appl. Math., Dekker, New York, 1995, pp.~71--83.

\bibitem{DKLR16}
{\sc S.~J. Dilworth, D.~Kutzarova, G.~Lancien, and N.~L. Randrianarivony}, {\em
  Equivalent norms with the property ($\beta$) of {R}olewicz}, Rev. R. Acad.
  Cienc. Exactas F\'\i s. Nat. Ser. A Math. RACSAM,  (2016), pp.~1--13.

\bibitem{DKLRZ13}
{\sc S.~J. Dilworth, D.~Kutzarova, N.~Lovasoa~Randrianarivony, J.~P. Revalski,
  and N.~V. Zhivkov}, {\em Compactly uniformly convex spaces and property
  {$(\beta)$} of {R}olewicz}, J. Math. Anal. Appl., 402 (2013), pp.~297--307.

\bibitem{DK16}
{\sc S.~Draga and T.~Kochanek}, {\em The {S}zlenk power type and tensor
  products of {B}anach spaces}, Proc. Amer. Math. Soc., 145 (2017),
  pp.~1685--1698.

\bibitem{D02}
{\sc Y.~Dutriex}, {\em G\'eom\'etrie non lin\'eraire des espaces de {B}anach},
  PhD thesis, Universit\'e Paris 6, 2002.

\bibitem{FHHMZ11}
{\sc M.~Fabian, P.~Habala, P.~H{\'a}jek, V.~Montesinos, and V.~Zizler}, {\em
  Banach space theory}, CMS Books in Mathematics/Ouvrages de Math\'ematiques de
  la SMC, Springer, New York, 2011.
\newblock The basis for linear and nonlinear analysis.

\bibitem{G01}
{\sc M.~Girardi}, {\em The dual of the {J}ames tree space is asymptotically
  uniformly convex}, Studia Math., 147 (2001), pp.~119--130.

\bibitem{GJT07}
{\sc R.~Gonzalo, J.~A. Jaramillo, and S.~L. Troyanski}, {\em High order
  smoothness and asymptotic structure in {B}anach spaces}, J. Convex Anal., 14
  (2007), pp.~249--269.

\bibitem{H80}
{\sc R.~Huff}, {\em Banach spaces which are nearly uniformly convex}, Rocky
  Mountain J. Math., 10 (1980), pp.~743--749.

\bibitem{JLPS02}
{\sc W.~B. Johnson, J.~Lindenstrauss, D.~Preiss, and G.~Schechtman}, {\em
  Almost {F}r\'echet differentiability of {L}ipschitz mappings between
  infinite-dimensional {B}anach spaces}, Proc. London Math. Soc. (3), 84
  (2002), pp.~711--746.

\bibitem{JO01}
{\sc W.~B. Johnson and T.~Oikhberg}, {\em Separable lifting property and
  extensions of local reflexivity}, Illinois J. Math., 45 (2001), pp.~123--137.

\bibitem{KOS99}
{\sc H.~Knaust, E.~Odell, and T.~Schlumprecht}, {\em On asymptotic structure,
  the {S}zlenk index and {UKK} properties in {B}anach spaces}, Positivity, 3
  (1999), pp.~173--199.

\bibitem{L92}
{\sc G.~Lancien}, {\em Th\'eorie de l'indice et probl\`emes de renormage en
  g\'eometie des espaces de Banach}, PhD thesis, Universit\'e Paris 6, 1992.

\bibitem{L90}
{\sc C.~Lennard}, {\em {${C}_1$} is uniformly {K}adec-{K}lee}, Proc. Amer.
  Math. Soc., 109 (1990), pp.~71--77.

\bibitem{LT77}
{\sc J.~Lindenstrauss and L.~Tzafriri}, {\em Classical {B}anach spaces. {I}},
  Springer-Verlag, Berlin-New York, 1977.
\newblock Sequence spaces, Ergebnisse der Mathematik und ihrer Grenzgebiete,
  Vol. 92.

\bibitem{P89}
{\sc S.~Prus}, {\em Nearly uniformly smooth {B}anach spaces}, Boll. Un. Mat.
  Ital. B (7), 3 (1989), pp.~507--521.

\bibitem{P99}
\leavevmode\vrule height 2pt depth -1.6pt width 23pt, {\em On
  infinite-dimensional uniform smoothness of {B}anach spaces}, Comment. Math.
  Univ. Carolin., 40 (1999), pp.~97--105.

\bibitem{R13}
{\sc M.~Raja}, {\em On asymptotically uniformly smooth {B}anach spaces}, J.
  Funct. Anal., 264 (2013), pp.~479--492.

\bibitem{RS86}
{\sc W.~M. Ruess and C.~P. Stegall}, {\em Exposed and denting points in duals
  of operator spaces}, Israel J. Math., 53 (1986), pp.~163--190.

\bibitem{R02}
{\sc R.~A. Ryan}, {\em Introduction to tensor products of {B}anach spaces},
  Springer Monographs in Mathematics, Springer-Verlag London, Ltd., London,
  2002.

\bibitem{S15}
{\sc T.~Schlumprecht}, {\em On {Z}ippin's embedding theorem of {B}anach spaces
  into {B}anach spaces with bases}, Adv. Math., 274 (2015), pp.~833--880.

\bibitem{DS83}
{\sc D.~van Dulst and B.~Sims}, {\em Fixed points of nonexpansive mappings and
  {C}hebyshev centers in {B}anach spaces with norms of type ({KK})}, in Banach
  space theory and its applications ({B}ucharest, 1981), vol.~991 of Lecture
  Notes in Math., Springer, Berlin-New York, 1983, pp.~35--43.

\end{thebibliography}

\end{document}